
\documentclass[12pt,leqno,twoside]{amsart}

\usepackage{amstext,amsmath,amscd, bezier,indentfirst,amsthm,amsgen,enumerate, geometry,mathrsfs}

\usepackage[all,knot,arc,import,poly,cmtip]{xy}
\usepackage{amsfonts,color, soul}  
\usepackage[colorlinks=true, pdfstartview=FitV, linkcolor=blue, citecolor=blue, urlcolor=blue]{hyperref}
\usepackage{comment}

\usepackage{amssymb}
\usepackage{latexsym}
\usepackage{epsfig}
\usepackage{graphicx,enumerate}
\usepackage{srcltx}
\usepackage{enumitem}

\topmargin 0cm     
\headsep 1cm
\headheight 0cm
\evensidemargin 0.25cm
\oddsidemargin 0.25cm
 \textwidth 16cm        
 \textheight 21.6cm     

\newtheorem{theorem}{Theorem}[section]
\newtheorem{corollary}[theorem]{Corollary}
\newtheorem{lemma}[theorem]{Lemma}
\newtheorem{proposition}[theorem]{Proposition}

\theoremstyle{definition}
\newtheorem{definition}[theorem]{Definition}
\theoremstyle{remark}
\newtheorem{remark}[theorem]{\sc Remark}
\newtheorem{example}[theorem]{\sc Example}





\renewcommand{\Box}{\square}    


\newcommand{\Sing}{{\mathrm{Sing\hspace{2pt}}}}

\newcommand{\ord}{{\mathrm{ord}}}

\newcommand{\im}{{\mathrm{Im\hspace{1pt}}}}

\newcommand{\ity}{{\infty}}

\newcommand{\e}{\varepsilon}
\newcommand{\m}{\setminus}

\newcommand{\fin}{\hspace*{\fill}$\Box$\vspace*{2mm}}


\newcommand{\cA}{{\mathcal A}}

\newcommand{\cD}{{\mathcal D}}

\newcommand{\cH}{{\mathcal H}}

\newcommand{\cO}{{\mathcal O}}

\newcommand{\cS}{{\mathcal S}}

\newcommand{\cZ}{{\mathcal Z}}


\newcommand{\bR}{{\mathbb R}}
\newcommand{\bC}{{\mathbb C}}
\newcommand{\bK}{{\mathbb K}}

\newcommand{\bN}{{\mathbb N}}
\newcommand{\bZ}{{\mathbb Z}}



\begin{document}

\title[Tame deformations]{Tame deformations of highly singular function germs}

\author{\sc Cezar Joi\c ta}
\address{Institute of Mathematics of the Romanian Academy, P.O. Box 1-764,
 014700 Bucure\c sti, Romania and Laboratoire Europ\' een Associ\'e  CNRS Franco-Roumain Math-Mode}
\email{Cezar.Joita@imar.ro}

\author{\sc Matteo Stockinger}
\address{\' Ecole Normale Sup\' erieure - PSL, 45 rue d'Ulm, 75230, Paris Cedex 05, France}
\email{mstockinger@clipper.ens.psl.eu}

\author{Mihai Tib\u{a}r}
\address{Univ. Lille, CNRS, UMR 8524 -- Laboratoire Paul Painlev\'e, F-59000 Lille, France}
\email{mihai-marius.tibar@univ-lille.fr}

\subjclass[2010]{14B07, 32S30, 14D06, 32S55, 14P15}



\keywords{deformations, fibrations, fibre constancy}

\thanks{The authors acknowledge the partial support of the Labex CEMPI (ANR-11-LABX-0007-01).
C.  Joi\c ta and M. Tib\u ar acknowledges support from the project ``Singularities and Applications'' - CF 132/31.07.2023 funded by the European Union - NextGenerationEU - through Romania's National Recovery and Resilience Plan, and support by the grant CNRS-INSMI-IEA-329. }

\begin{abstract}
We give analytic and algebraic conditions under which a deformation of real analytic functions with non-isolated singular locus is a deformation with fibre constancy. 
\end{abstract}

\maketitle

\section{Introduction}\label{s:intro}

Deformations of function germs is a classical topic with abundant results, and
has a huge impact in geometry, algebra and topology as well as in many more applied fields.
Recently, motivated by solving a technical glitch within a Floer Homology problem,  Ciprian Manolescu \cite{Ma} asked the following: 

\smallskip
\noindent
\textbf{\emph{Question 1}}.  \emph{Consider a family of highly singular real-analytic function germs $F_t : (\bR^n, 0) \to (\bR,0)$,   such that the Jacobian ideal $(\partial F_{t})$ of $F_t$ is independent of $t$ in a small disk at 0. Is the homology of the Milnor fibres of $F_{t}$ ``constant'' in some sense?} 

\smallskip

 In the \emph{complex setting}, for holomorphic function germs with \emph{isolated singularities} (i.e. the very special case of the 0-dimensional singular locus), the constancy of the Jacobian ideal $(\partial F_{t})$ obviously implies that the Milnor number $\mu(F_{t}) = \dim_{\bC}\frac{\cO_{n}}{(\partial F_{t})}$ is constant, and thus the homology of the Milnor fibre is constant too since it is concentrated in the group $\tilde H_{n-1}(F_{t}, \bZ)$, which is free of rank  precisely $\mu(F_{t})$. Moreover, by the L\^{e}-Ramanujam \cite{LR}, Timourian \cite{Tim}, and King \cite{Ki}  results, the constancy of the Milnor number implies that the deformation $F_{t}$ is topologically trivial with the exception of the surface case (i.e. $n=3$) which remains open.

In the case of complex non-isolated singularities, this question seems to be open in general. There is nevertheless a rich literature on equisingularity problems for families of function germs, dealing with various algebraic-geometric sufficient conditions, starting with the Whitney equisingularity studied by Teissier \cite{Te, Te-equi}, extended by Gaffney \cite{Ga}, Houston \cite{Hou} and many others.
 
In the real setting there are results by King \cite{Ki} on the topological triviality of one-parameter families of function germs with \emph{isolated singularity}. While the non-isolated case remains widely open in general,  Parusinski's paper \cite{Pa} goes beyond ``isolated singularity'' in the setting of families of the form $F(x,t) =f(x) +tg(x)$ subject to the condition: 
\begin{equation}\label{eq:parcond}
     | g(x) | \ll  \bigl\| \partial f (x) - t \partial g(x)  \bigr\|   \  \mbox{ whenever } (x,t) \to (x_{0},0) \mbox{,  where } x_{0}\in \{ f=0\}\cap \{g=0\},
\end{equation}
and, remarkably,  shows that such a family is \emph{topologically trivial}, cf \cite{Pa}. 



  
  
  
  \

In this paper we are not interested in the problem of the topological triviality of families of function germs. Instead, we will answer \emph{Question 1} in a different way, by focussing on the constancy of the fibration \eqref{eq:faraway} in a fixed ball, as explained in the following.

\begin{definition}\label{d:topconstantaway}
 Let 
$F:(\bK^n\times\bK, 0)\to(\bK^m, 0)$, $n\ge m\ge 1$, $\bK = \bR$ or $\bC$,  be a $\bK$-analytic map germ,
regarded as a 1-parameter deformation $F_{t}(x) = F(x,t)$ of the function germ $F_{0}= f : (\bK^{n}, 0) \to (\bK, 0)$ having a singular locus of dimension $\ge 1$.

We say that the deformation $F$ of the singular function germ $f$ is \emph{a deformation with fibre constancy}
  if for any small enough radius $r>0$, there exist $0<\delta := \delta(r) \ll r$, and $0<\eta := \eta(r, \delta) \ll \delta$ such that the restriction:
  \begin{equation}\label{eq:faraway}
 (F_{t})_{|} :  B_{r} \cap F_{t}^{-1}(\partial D_{\delta}) \to \partial D_{\delta}.
\end{equation}
is a locally trivial fibration, which is independent,  modulo isotopy, of $r$ and $\delta$,  and of the parameter $t$ with $|t| \le \eta \ll \delta$. 
\end{definition}

In the real setting,  $D_{\delta}$ denotes an interval centred at the origin and its boundary $\partial D_{\delta} = \{ a_{-}, a_{+}\}$ consists of two points. In this setting, the independency asked by Definition \ref{d:topconstantaway} means the invariance of the two fibres\footnote{Of which one may be empty, but  not both empty since we assume that the function germ $F_{0}$ is non-constant.},  $B\cap F_{t}^{-1}(a_{-})$ and $B\cap F_{t}^{-1}(a_{+})$.  
One may then ask:

\smallskip
\noindent
\textbf{\emph{Question 2}}.  \emph{What natural (and minimal) condition implies that the deformation $F$ is a deformation with fibre constancy? }  

\medskip

The problem posed by \emph{Question 2} is not a classical equisingularity problem and makes really sense in the case of $f$ with non-isolated singularities.  If the function germ $F_{0}$ has  an isolated singularity, then for $t\not= 0$ this
may either split into several isolated singularities of the restriction map $(F_{t})_|$ from \eqref{eq:faraway} above, or not split at all.  Even if the isolated singularity of $F_{0}$ splits,  the tube fibrations \eqref{eq:faraway} still exists precisely because the splitting is concentrated at the origin only (i.e. because of the stated choice of a much smaller range of the parameter $t$, namely $|t| \ll \delta$), see e.g. \cite{JiT} for the study of a slightly more general setting of isolated singularities.  
In case it does not split, then one has stronger results: the above cited ones by L\^{e}-Ramanujam \cite{LR} and Timourian \cite{Tim} over $\bC$, and by King \cite{Ki} over $\bR$,  yield the topological triviality of the family of function germs $F_{t}$, or what one calls topological equisingularity. 

In the setting of \emph{non-isolated holomorphic functions}, the concept of   ``deformation with fibre constancy'' (Definition \ref{d:topconstantaway}) appeared under the name of ``admissible  deformations'' in the studies of the topology of certain classes of function germs with non-isolated singular locus,  starting with the seminal paper by Siersma \cite{Si1} on ``line singularities'',  and the follow-up  more general studies of the Milnor fibre of function germs with 1-dimensional and 2-dimension singular loci, see e.g.   \cite{Si-icis},  \cite{Pe}, \cite{dJ}, \cite{Za},  \cite{MS}, \cite{Ne}, \cite{Fe}, \cite{FM}, \cite{ST-mildefo}.   
The very recent paper \cite{Ho} gives algebro-geometric criteria which are sufficient to establish admissibility for complex-analytic deformations.



\smallskip

In both settings, complex and real,  our general answer to \emph{Question 2} is (Theorem \ref{t:jt}): \emph{``tame deformations'' are deformations with fibre constancy}.
 This ``tameness'' condition, introduced in Definition \ref{d:tamedefo}, is based on the transversality of all small enough spheres to the fibres of the deformation $(F_t)_|$ defined at \eqref{eq:faraway} above an appropriate small disk $D_\delta$. This derives from a principle established by Milnor \cite{Mi} and which occurs, mostly under the name of \emph{$\rho$-regularity}, in several studies of fibrations of real analytic map germs on smooth or singular base spaces, such as  \cite{ACT}, \cite{ACT-inf},   \cite{dST0}, \cite{dSTi}, \cite{ART}, \cite{JT}, \cite{CJT}, and see \cite{Ti-book} for older references.

\medskip

Searching for conditions which imply tameness, we focus on two of them, one analytic and the other algebraic.
Let us introduce the first one. It is inspired by Parusinski's condition\footnote{Parusinski's condition \eqref{eq:parcond} had extended a condition used  in the classical case of isolated singularities by Teissier \cite{Te} and L\^{e}-Saito \cite{LS}, and also extended a condition used before for studying   the fibres of  polynomial functions at infinity (\cite{Pa-note}, \cite{ST}, \cite{Ti-compo}, \cite{Ti-book},  \cite[\S 3]{Pa} etc).} \cite{Pa} evoked above as \eqref{eq:parcond}, although much weaker than that.

\begin{definition}\label{d:flat}
Let $F_{t}(x) = F(x,t)$ be an analytic deformation of $f$ (cf Definition \ref{d:topconstantaway}). We consider the following condition: 

\begin{equation}\label{eq:cond}
\left\{
\begin{aligned}
\quad &  \mbox{For any } x_{0}\in \Sing F_{0} \m \{0\}, \mbox{ there is }  c_{x_{0}} >0  \mbox{ such that }  \\
 \quad  &   \biggl| \frac{\partial F}{\partial t} \biggr| \le  c_{x_{0}} \biggl\| \frac{\partial F}{\partial x_{1}}, \ldots , \frac{\partial F}{\partial x_{n}}  \biggr\|   \  \mbox{ when } (x,t) \to (x_{0},0), \mbox{ for } (x,t)\not\in \Sing \widetilde F.
\end{aligned}
\right.
\end{equation}
 where $\widetilde F(x,t) := \bigl(F(x,t), t\bigr)$, where $\widetilde F(x,t)$ is the map germ defined at \eqref{eq:Ftilde}.

\end{definition}

\medskip


 Note that  condition \eqref{eq:cond} excepts\footnote{See  the examples in \S\ref{examples}. Typically such deformations do not satisfy condition \eqref{eq:cond} at the origin $(0,0)$ .} the origin $(0,0)$. This allows the splitting of the singular locus of $F_{0}$ out of the origin (but only at the origin). 
 
 Condition \eqref{eq:cond} may be interpreted as an integral dependence  as follows (see Proposition \ref{p:integral}):  \emph{for any $x_0\neq 0$ close enough to $0$, the function germ 
 $\frac{\partial F}{\partial t}$ belongs to the integral closure 
 $\overline{\bigl( \frac{\partial F}{\partial x_{1}}, \ldots , \frac{\partial F}{\partial x_{n}} \bigr)}$.}
This may also be contrasted to Teissier's condition (c) \cite{Te}, \cite{Te-equi} in the complex setting: \emph{$\frac{\partial F}{\partial t}$ belongs to the integral closure 
 $\overline{\mathfrak{m}_x\bigl( \frac{\partial F}{\partial x_{1}}, \ldots , \frac{\partial F}{\partial x_{n}} \bigr)}$, where
 $\mathfrak{m}_x$  denotes the maximal ideal in $\cO_{n}$}, which
  has been used to study families of holomorphic functions with \emph{isolated singularity}. 
     Still in case of \emph{complex isolated singularities}, Teissier  showed in \cite{Te},  \cite{Te-inv}, that this is also equivalent to the \emph{Whitney equisingularity} of the family of function germs $F_{t}$, which is known to imply its topological triviality.

Here we go beyond the isolated singularity setting, and prove the following:
\begin{theorem}\label{t:genP}
  Let $F(x,t) = F_{t}(x)$ be a $\bK$-analytic deformation of $F_{0}$  which satisfies condition \eqref{eq:cond} of Definition \ref{d:flat}.
  
  Then the deformation $F$ of $F_{0}$ is a deformation with fibre constancy in the sense of Definition \ref{d:topconstantaway}. Moreover, $\widetilde F$ has a Milnor-Hamm tube fibration \eqref{eq:tube}, cf Definition \ref{d:tube}.
\end{theorem}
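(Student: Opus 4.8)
The plan is to reduce Theorem \ref{t:genP} to the tameness criterion: by Theorem \ref{t:jt} it suffices to show that condition \eqref{eq:cond} forces $F$ to be a \emph{tame deformation} in the sense of Definition \ref{d:tamedefo}, after which the Milnor--Hamm tube fibration \eqref{eq:tube} of $\widetilde F$ follows from the very same $\rho$-regularity by the standard Ehresmann argument, cf Definition \ref{d:tube}. Recall that, with respect to $\rho(x,t)=\|x\|$, the $\rho$-nonregular locus of $\widetilde F$ is the \emph{relative Milnor set} $M=\{(x,t): \partial_x F(x,t)\text{ is collinear with }x\}$, and that tameness amounts to the transversality of all small spheres $S_{r'}$ to the fibres of $(F_t)_|$ over $\partial D_\delta$, uniformly for $|t|\le\eta$; equivalently, that $M$ does not meet the small-level tube $\{0<|F|\le\delta\}\cap(B_r\smallsetminus\{0\})\cap\{|t|\le\eta\}$ for a suitable choice $0<\eta\ll\delta\ll r$.

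First I would set up a proof by contradiction through the curve selection lemma. If tameness fails for every such choice, one extracts a real (resp. complex) analytic arc $\gamma(s)=(x(s),t(s))$, $s\in[0,\varepsilon)$, lying in $M\smallsetminus\Sing\widetilde F$ for $s>0$, with $\gamma(0)=(x_0,0)$, $F(\gamma(s))\to 0$ but $F(\gamma(s))\neq 0$, and $t(s)\to 0$. The limit $x_0$ falls into three cases. If $x_0=0$, the arc is absorbed by the Milnor--Hamm fibration of the germ $F_0$ together with the choice $\eta\ll\delta\ll r$, which confines the splitting of the singular locus to the origin exactly as in the isolated case discussed after Definition \ref{d:topconstantaway}. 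If $x_0$ is a regular point of $F_0$, then $\widetilde F$ is a submersion near $(x_0,0)$ and transversality, being open, persists from $t=0$ to small $t$, so no such arc survives. The remaining, essential case is $x_0\in\Sing F_0\smallsetminus\{0\}$, and this is precisely where condition \eqref{eq:cond} is available.

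In this case one has along the arc the collinearity $\partial_x F(\gamma(s))=\lambda(s)\,x(s)$ together with the bound $|\partial_t F(\gamma(s))|\le c_{x_0}\|\partial_x F(\gamma(s))\|$ furnished by \eqref{eq:cond}---equivalently, the integral dependence $\partial_t F\in\overline{(\partial_{x_1}F,\dots,\partial_{x_n}F)}$ of Proposition \ref{p:integral}. Differentiating $s\mapsto F(\gamma(s))$ and substituting the collinearity, the spatial contribution $\langle\partial_x F,\dot x\rangle=\lambda\langle x,\dot x\rangle$ is governed by $\|\partial_x F\|$, while the temporal contribution $\partial_t F\,\dot t$ is dominated by $\|\partial_x F\|$ thanks to \eqref{eq:cond}; comparing the resulting order of vanishing of $F$ along $\gamma$ with the \L ojasiewicz gradient inequality for $F$ at $(x_0,0)$ is meant to force $F\equiv 0$ on $\gamma$, contradicting $F(\gamma(s))\neq 0$. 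The same bound is what trivialises the deformation in the $t$-direction: on the tame region one solves $\langle\partial_x F,w\rangle=-\partial_t F$ with $\|w\|\le|\partial_t F|/\|\partial_x F\|\le c_{x_0}$, so the vector field $\partial_t+\sum_i w_i\,\partial_{x_i}$ is bounded, hence integrable, and its flow realises the isotopy asked for in Definition \ref{d:topconstantaway}.

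The main obstacle is exactly this essential case: the order comparison along $\gamma$ must be carried out uniformly as $(x,t)\to(x_0,0)$ with $t\to 0$, reconciling the $t$-direction with the sphere transversality, and it is here that the strength of \eqref{eq:cond}---controlling $\partial_t F$ by the spatial gradient $\partial_x F$ rather than by $\mathfrak m_x(\partial_x F)$---is used in full. Once tameness is established in all three cases, Theorem \ref{t:jt} yields that $F$ is a deformation with fibre constancy, and the uniform $\rho$-regularity of $\widetilde F$ delivers its Milnor--Hamm tube fibration \eqref{eq:tube}.
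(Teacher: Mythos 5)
Your reduction is the right one: by Theorem \ref{t:jt} everything comes down to showing that condition \eqref{eq:cond} implies tameness, and you correctly locate the only nontrivial case at points $x_0\in\Sing F_0\setminus\{0\}$. But precisely there your argument has a genuine gap, which you yourself flag (``is meant to force''). Along an arc $\gamma(s)=(x(s),t(s))$ in the Milnor set with $\gamma(0)=(x_0,0)$, the identity $\frac{d}{ds}F(\gamma)=\lambda\langle x,\dot x\rangle+\partial_tF\,\dot t$ together with $|\partial_tF|\le c\,\|\partial_xF\|=c\,|\lambda|\,\|x\|$ only yields an \emph{upper} bound of the form $|\frac{d}{ds}F(\gamma)|\le C|\lambda|$, while the \L ojasiewicz gradient inequality yields a \emph{lower} bound $|\lambda|\ge C'|F(\gamma)|^{\theta}$; both translate into upper bounds on $\ord_s\lambda$ and cannot be combined to force $F\circ\gamma\equiv 0$. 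The classical Milnor-type order comparison closes only when $\rho(\gamma(s))\to 0$, which fails here since $\|x_0\|\neq 0$. So the curve selection lemma plus \eqref{eq:cond} alone does not establish tameness at non-isolated singular points; some stratification-theoretic input is indispensable.

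The paper supplies exactly that input in Theorem \ref{t:tame}: one starts from a Thom $(a_F)$-regular stratification of $F^{-1}(0)\subset\bK^{n+1}$ (its existence is Hironaka's theorem), slices it by $\{t=0\}$, and refines to a Whitney $(a)$-regular stratification $\cS'$ of $F_0^{-1}(0)$. A dimension count on limits of tangent spaces shows that failure of the Thom $(a_{\widetilde F})$ condition at $(y,0)$ would force the normalized full gradient of $F$ to converge to $(0,\dots,0,\pm1)$, i.e. $|\partial F/\partial t|/\|\nabla F\|\to 1$, which is exactly what \eqref{eq:cond} forbids; transversality of small spheres to the strata of $\cS'$ then transfers to the nearby fibres through this $\partial$-Thom condition, giving tameness, after which Theorem \ref{t:jt} concludes. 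A secondary point: your bounded vector field $\partial_t+\sum_i w_i\partial_{x_i}$ is Parusi\'nski's device for topological triviality under the stronger hypothesis \eqref{eq:parcond}; under \eqref{eq:cond} it is neither controlled near $\Sing\widetilde F$ nor needed, since fibre constancy follows from local triviality of the tube fibration over the (finitely many) connected components of $\cH_F\cap(D_\delta\times D_\eta)\setminus\Delta$, as in Corollary \ref{c:trivbd}.
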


The proof follows from Theorem \ref{t:jt} after showing in  Theorem \ref{t:tame} that  a nice deformation  is tame (Definition \ref{d:tamedefo}). The tameness follows from  the proof of the existence of the Milnor-Hamm tube fibration which is based on checking the \emph{Thom regularity condition},  a  well-known method employed e.g. in \cite{Hi}, \cite{LS}, \cite{Sa1, Sa2}, \cite{PT}, \cite{dSTi}, \cite{ACT} etc. Here we use a less demanding condition called \emph{$\partial$-Thom regularity} (see \S \ref{ss:partialThom} for details), and that the $\partial$-Thom regularity implies the $\rho$-regularity (Proposition \ref{p:fib}).

\

Our second condition is algebraic. We show by Theorem \ref{t:general}
that if the Jacobian ideal inclusion $(\partial F_{t})\subset (\partial F_{0})$ holds for any $t$ close enough to 0, then condition \eqref{eq:cond} holds, hence
the  deformation $F$ is tame, and thus  it is ``a deformation with fibre constancy'' by our general Theorem \ref{t:tame}. To a certain extent,  this comes closer to Manolescu's \emph{Question 1}. Moreover,  in \S \ref{s:jacocrit} we slightly extend  the setting by replacing  the ideal $(\partial F_{0})$ in the above inclusion by its \emph{integral closure} $\overline{(\partial F_{0})}$.

\

Schematically,  this is what we prove here:\\
\begin{equation}\label{eq:schema}
{\setlength{\fboxrule}{1pt}\fbox{\scriptsize{deformation~with~fibre~constancy }}} 
 \Longleftarrow  \framebox{\scriptsize{tame~deformation}} \Longleftarrow  \framebox{\eqref{eq:cond}} \Longleftarrow  \framebox{\scriptsize{$(\partial F_{t})\subset (\overline{\partial F_{0})}$}}
\end{equation}

\

\

\noindent \textbf{Acknowledgements.} We thank Ciprian Manolescu for bringing to our attention \emph{Question 1}. We also thank him and Lauren\c tiu P\u aunescu for enriching observations  which impelled our study.  Special thanks go to the referee for very interesting remarks which helped us improve the paper.

C. Joi\c ta and M. Tib\u ar acknowledge support of the project ``Singularities and Applications'' -  CF 132/31.07.2023 funded by the European Union - NextGenerationEU - through Romania's National Recovery and Resilience Plan,  and support by the grant CNRS-INSMI-IEA-329.

\section{Image and discriminant of a deformation}\label{s:image}

Let $F_{0}:(\bK^{n},0) \to (\bK, 0)$ be a non-constant analytic function germ.
Let  $F_{t}(x):= F(x, t)$ be an analytic deformation of the analytic function germ $F_{0}(x) := F(x,0)$ such that $F(0,t) =0$ for any $t$ in some small neighbourhood of $0\in \bK$.
Let 
\begin{equation}\label{eq:Ftilde}
  \widetilde F := (F(x,t),t) : (\bK^{n}\times \bK, 0) \to (\bK \times \bK, 0)
\end{equation}
 be the analytic   map germ defined by  the deformation $F$.
The singular set   $\Sing \widetilde F$ is the zero locus $Z\bigl( \frac{\partial F}{\partial x_{1}}(x,t), \ldots , \frac{\partial F}{\partial x_{n}}(x,t)\bigr)$. As a set germ at the origin $(0,0)$, it contains the union of set germs $\bigcup_{t\in \bK}\Sing F_{t}$, but may contain other irreducible components, see e.g. Example \ref{ex:notJ}. Note that   $\Sing \widetilde F \cap \{ t=0\} = \Sing F_{0}$, and that we have the inclusion $\Sing F\subset \Sing \widetilde F$, which may be a strict inclusion even when restricted to the slice $\{ t=0\}$, like in the example: $F(x,y,t) = x^{2} +ty$.

We will consider here the class of  deformations $F$ with the property  that the inclusion $\Sing F\subset \Sing \widetilde F$ restricts to an equality in the slice  $\{ t=0\}$, namely deformations $F$ such that:
\begin{equation}\label{eq:cond0}
    (\Sing F)_{| t=0} = (\Sing \widetilde F)_{| t=0}.
 \end{equation}  
We refer to Proposition \ref{p:delta2} for the relations of \eqref{eq:cond0} with other conditions.  See also the Examples section \S\ref{examples}
for more comments.

 \subsection{The image problem}\label{ss:imag}
 Let us start by observing that analytic function germs have well-defined images as set germs, either over $\bC$ or over $\bR$. This is the case for our function germs $F_{t}$, for any $t$. 
 However this is no more the case for the image of a map germ $G: (\bK^{m}, 0) \to (\bK^{2}, 0)$  with $m\ge 2$, as pointed out in \cite{JT}, see also \cite{ART}. The map $G$ may be locally open, i.e. one may have the equality of set germs $(\im G, 0) = (\bK^{2},0)$, but certain map germs, for instance $G(x,y) = (x, xy)$, do not even have well-defined image as a set germ at the origin.  For the problem ``when a map germ has a well defined image as a set germ'' we refer to \cite{JT-arkiv}, see also \cite{ART} and \cite{JT}.
 
 In our setting, the map germ $\widetilde F$ associated to an analytic deformation $F(x,t)$ is somewhat more special.
\begin{proposition}\label{p:image}
In case $\bK = \bC$, the map germ $\widetilde F$ is locally open. \\
In case $\bK = \bR$ we have:
\begin{enumerate}
\rm  \item \it   If  $F_{0}$ is  locally open then $\widetilde F$ is a locally open map germ.
In particular this is the case if $\Sing F_{0} \not= F_{0}^{-1}(0)$.
\rm  \item \it   If  $ F_{0}$ is  not locally open then for any radius $r>0$,  the image $\widetilde F(B_{r})$  contains  
either the set germ $(\bR_{\ge 0} \times \bR, 0)$ or the set germ $(\bR_{\le 0} \times \bR, 0)$.
\end{enumerate}
\end{proposition}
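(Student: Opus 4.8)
The plan is to exploit the special product structure of $\widetilde F$: since its second coordinate is simply $t$, the map germ fibres over the parameter line, and for a representative one has $\widetilde F(B_r\times D_\eta)=\bigcup_{|t|<\eta} F_t(B_r)\times\{t\}$. Consequently everything reduces to describing the images $F_t(B_r)\subset\bK$ of the slices and, crucially, to controlling them \emph{uniformly} as $t\to 0$. Over $\bR$ the governing dichotomy is that a real-analytic germ with $F_0(0)=0$ is locally open at $0$ if and only if it takes both signs in every neighbourhood of $0$; otherwise it keeps a constant sign there. I would first record the elementary but useful fact that $\Sing F_0\subset F_0^{-1}(0)$ always holds as germs at $0$: each irreducible branch of the analytic germ $\Sing F_0$ passes through $0$, the restriction of $F_0$ to the smooth part of such a connected branch has differential $\nabla F_0=0$, hence $F_0$ is constant on it and equal to $F_0(0)=0$.

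For part (a), assuming $F_0$ locally open I would choose interior points $x_+,x_-\in B_r$ with $F_0(x_+)>0>F_0(x_-)$, which exist precisely because $F_0$ takes both signs. By continuity of $F$ these signs persist, say $F(x_+,t)>c>0>-c>F(x_-,t)$ for all $|t|<\eta$ and a fixed $c>0$; the intermediate value theorem along a path inside the connected ball $B_r$ then forces $F_t(B_r)\supset(-c,c)$ for every such $t$. Hence $\widetilde F(B_r\times D_\eta)\supset(-c,c)\times D_\eta$, a neighbourhood of the origin, so $\widetilde F$ is locally open. For the ``in particular'' clause I combine $\Sing F_0\subset F_0^{-1}(0)$ with the failure of equality: if $\Sing F_0\neq F_0^{-1}(0)$ then $F_0^{-1}(0)\not\subset\Sing F_0$, so arbitrarily close to $0$ the zero level carries regular points of $F_0$, near which $F_0$ is a submersion and therefore takes both signs; thus $F_0$ is open and the previous argument applies.

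For part (b), suppose $F_0$ is not locally open; then it has constant sign near $0$, say $F_0\ge 0$ (the case $F_0\le 0$ being symmetric and producing the other half-space germ). Since $F_0$ is non-constant and $B_r$ is connected, $F_0(B_r)$ is an interval $[0,M]$ with $M>0$; fix an interior point $x_1\in B_r$ with $F_0(x_1)=\varepsilon_0>0$. By continuity $F(x_1,t)>\varepsilon_0/2$ for all $|t|<\eta$, while $F(0,t)=0$, so the intermediate value theorem along a path from $0$ to $x_1$ in $B_r$ gives $F_t(B_r)\supset[0,\varepsilon_0/2]$ uniformly. Therefore $\widetilde F(B_r\times D_\eta)\supset[0,\varepsilon_0/2]\times D_\eta$, which contains the set germ $(\bR_{\ge 0}\times\bR,0)$.

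Finally, over $\bC$ a non-constant holomorphic germ is automatically open, so one is in the analogue of case (a); the obstacle is that the intermediate value theorem is no longer available to guarantee the crucial \emph{uniformity} in $t$. I would instead restrict to a generic complex line $L=\bC\cdot v$ through $0$ on which $g_0(z):=F_0(zv)$ is a non-constant one-variable holomorphic germ, hence has a zero of finite order at $0$; choosing a small circle $|z|=\rho$ on which $|g_0|\ge m>0$ and invoking continuity of $F$, for $|t|<\eta$ one has $|g_t-g_0|<m/2$ on that circle, where $g_t(z):=F(zv,t)$. Rouch\'e's theorem then shows that $g_t$ attains every value of the disc $D_{m/2}$ inside $|z|<\rho$, whence $F_t(B_r)\supset D_{m/2}$ for all small $t$ and $\widetilde F(B_r\times D_\eta)\supset D_{m/2}\times D_\eta$ is a neighbourhood of the origin. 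The main difficulty throughout is exactly this uniform non-degeneracy: openness of each individual slice $F_t$ is easy, and the whole point is to prevent the covered neighbourhood from shrinking to $\{0\}$ as $t\to0$, controlled in the real case by fixed interior test points of opposite sign and in the complex case by the Rouch\'e count on a fixed circle.
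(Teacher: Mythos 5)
Your proof is correct, and in the real case it is essentially the paper's own argument: the paper isolates exactly your uniform intermediate-value step as Lemma \ref{l:elem} (writing $F=F_0+tG$ with $G(0,t)=0$, fixing a test point $x_1$ with $F_0(x_1)=\e_0/2$, using $F(0,\beta)=0$ and continuity along the segment from $0$ to $x_1$), and then deduces (b) and the first clause of (a) from it, just as you do. You genuinely diverge in the two places where the paper invokes external results. For $\bK=\bC$ the paper quotes \cite[Theorem 1.1(a)(i)]{JT-arkiv}, using that $\widetilde F^{-1}(0,0)=F_0^{-1}(0)\times\{0\}$ has codimension $2$ in the source; your restriction to a generic complex line together with Rouch\'e's theorem on a fixed circle $|z|=\rho$ is a short self-contained substitute, and it correctly delivers the uniformity in $t$ that openness of each individual slice $F_t$ would not. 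For the ``in particular'' clause of (a) the paper applies \cite[Lemma 2.5]{JT} to $\widetilde F$ via the identity $\Sing F_0=\Sing\widetilde F\cap\widetilde F^{-1}(0,0)$; you instead observe that regular points of $F_0^{-1}(0)$ accumulate at the origin, so $F_0$ takes both signs on every small ball and is therefore locally open, reducing to the first clause. Both substitutions are valid and make the proposition independent of the cited papers, at the cost of being specific to this product-structured $\widetilde F$ rather than following from general theorems on analytic map germs. One small caveat: your justification of $\Sing F_0\subset F_0^{-1}(0)$ argues on the smooth part of an irreducible branch of the real-analytic germ $\Sing F_0$, whose connectedness is not automatic over $\bR$; the clean route is to pass to the complexification, where the regular locus of an irreducible component of $\Sing F_0^{c}$ is connected and dense, so $F_0^{c}$ vanishes on it. The fact itself is standard and is used implicitly by the paper as well.
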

\begin{remark} 
In the above point (b) one may not have equality, therefore the word ``contains'' is important. Unlike the complex setting, in the real setting the map $\widetilde F$ might not have a well-defined image as a set germ at $0\in \bR^{2}$. This may happen in Proposition \ref{p:image}(b), as shown by the following very simple example:
 $F(x,y, t)=x^2+t(x+y)$ is a deformation of $F_{0}:(\bR^2,0) \to (\bR,0)$, $F_{0}(x,y)=x^2$.
 
 Note that if we view this example over $\bC$ then the image of $\widetilde F$ is a well-defined set germ, by Proposition \ref{p:image}.
\end{remark}
\begin{proof}[Proof of Proposition \ref{p:image}]\ \\
\noindent  \underline{Case $\bK = \bC$.}  The zero set $\widetilde F^{-1}(0,0)= F_{t}^{-1}(0)$ has codimension 2 in the source $\bC^{n}\times \bC$ of the map $\widetilde F$. Therefore   \cite[Theorem 1.1(a)(i)]{JT-arkiv} applies here, and tells that $\widetilde F$ is locally open.

\smallskip
\noindent   \underline{Case $\bK = \bR$.} 
\noindent Let $B_{r}\subset \bR^{n}\times \bR$ denote the open ball of radius $r>0$ centred at the origin, and let $B'_{r'}\subset \bR^{n}$ such a ball in $\bR^{n}$. We need the following result:

\begin{lemma}\label{l:elem}
  If $\im F_0\supset  \bR_{\ge 0}$ as set germs,
then for any $r>0$, we have $\widetilde F(B_{r})  \supset  \bR_{\ge 0} \times \bR$.\\
 The similar statement holds if we replace $\bR_{\ge 0}$ by $\bR_{\le 0}$.
\end{lemma}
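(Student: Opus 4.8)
The plan is to reduce everything to the one–variable intermediate value theorem applied fibrewise in $t$, exploiting the special product shape $\widetilde F(x,t) = (F(x,t),t)$: hitting a target $(s,\tau)$ with $\widetilde F$ amounts to solving $F_\tau(x)=s$, since the second coordinate matches automatically. Hence it suffices to show that, for every small $\tau$, the function $F_\tau$ attains every value in some interval $[0,\kappa)$ with $\kappa>0$ \emph{independent of $\tau$}, at points $(x,\tau)$ lying in $B_r$.

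First I would use the hypothesis $\im F_0 \supset \bR_{\ge 0}$ to extract, inside the ball $B'_{r/2}\subset\bR^n$, a single point $a$ with $F_0(a)=:c>0$. This is possible no matter how small $r$ is, precisely because the inclusion is an inclusion of set germs at $0$, so $F_0$ takes positive values in every neighbourhood of the origin; this is exactly how the quantifier ``for any $r>0$'' in the statement gets absorbed.

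Next comes the key step. Since $F$ is continuous and $F(0,t)=0$ for all small $t$, I would invoke continuity of $F$ at the single point $(a,0)$ to find $\eta>0$ such that $F(a,\tau)>c/2$ for all $|\tau|<\eta$, while $F(0,\tau)=0$ identically. Then for each fixed $\tau$ with $|\tau|<\eta$, the restriction of $F_\tau$ to the segment $[0,a]\subset B'_{r/2}$ is continuous with $F_\tau(0)=0$ and $F_\tau(a)>c/2$; by the intermediate value theorem it attains every value in $[0,c/2]$. Shrinking $\eta$ if necessary so that the segments $[0,a]\times\{\tau\}$ stay inside $B_r$, this places the whole box $\{(s,\tau) : 0\le s\le c/2,\ |\tau|<\eta\}$ inside $\widetilde F(B_r)$.

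Finally I would note that this box coincides, as a set germ at $(0,0)\in\bR^2$, with $\bR_{\ge 0}\times\bR$, giving the claimed germ inclusion $\widetilde F(B_r)\supset\bR_{\ge 0}\times\bR$; the case of $\bR_{\le 0}$ is identical after choosing $a$ with $F_0(a)<0$. The only point needing a little care is the \emph{uniformity} of the lower bound $c/2$ over all small $\tau$, but since it is read off at the single fixed point $a$ rather than along a moving family, it is immediate from continuity at $(a,0)$. The rest is just keeping the germ bookkeeping honest, i.e. that both hypothesis and conclusion are statements about germs at the origin.
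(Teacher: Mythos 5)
Your proof is correct and follows essentially the same route as the paper's: fix a point where $F_0$ is positive, keep the value bounded below uniformly for small $t$, and apply the intermediate value theorem along the segment to the origin where $F(0,t)=0$. The only cosmetic difference is that the paper obtains the uniform lower bound by writing $F=F_0+tG$ with $|G|\le 1$ on $B_r$, whereas you read it off from continuity of $F$ at the single point $(a,0)$; both work equally well.
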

\begin{proof}
The proof is elementary and uses only the continuity of the maps;  we provide it for completeness.

Our deformation of $F_{0}$ presents as $F(x,t)=F_0(x)+tG(x,t)$ with $G$ analytic, hence continuous, where $G(0,t)=0$ for all $t$ close enough to 0. Let  us fix some small enough radius $r>0$, 
such that $|G(x,t)|\leq 1$ on $B_{r}$. Let $r'>0$ and $\eta'>0$ such that $B'_{r'}\times (-\eta' , \eta' ) \subset B_{r}$.

 By our hypothesis $\im F_0\supset(\bR_{\ge 0}, 0)$, there exists $\e_0>0$ such that $F_0(B'_{r'})\supset[0,\e_0)$.
 Setting $\e:=\frac{\e_0}4$ and $\eta :=\min\{\eta' , \e\}$, we will show  that 
$\widetilde F(B_r)\supset [0,\e)\times(-\eta,\eta)$, as follows. 

Let $(\alpha,\beta)\in [0,\e)\times(-\eta,\eta)$. 
We have to show that there exits $x\in B'_{r'}$ such that $F(x,\beta)=\alpha$. Let $x_1\in B'_{r'}$ such that $F_0(x_1)=\frac{\e_0}2$.
Since $|G(x_1,\beta)|\leq 1$ and  $|\beta|<\e \le \frac{\e_0}4$, we get:
$$F(x_1,\beta)=\frac{\e_0}2+\beta G(x_1,\beta)>\frac{\e_0}4 = \e >\alpha. $$

Since we also have $F(0,\beta)=0\leq\alpha$, and since $F$ is continuous,
we conclude that there exists a point $x$ on the segment joining the origin with $x_1$, hence in $B'_{r'}$, such that $F(x,\beta)=\alpha$. Our claim is proved.

Mutatis mutandis, the same proof applies if we replace $\bR_{\ge 0}$ by $\bR_{\le 0}$.
\end{proof}

\noindent (b).  If $F_{0}$ is not locally open, then its image is either the set germ $(\bR_{\ge 0},0)$
or the set germ $(\bR_{\le 0},0)$.   We may thus apply Lemma \ref{l:elem} to conclude.

\medskip
\noindent (a). The first statement of  (a) is also a direct consequence of  Lemma \ref{l:elem}.

Let us prove the second statement of (a). We have $\Sing F_{0} =\Sing \widetilde F \cap \widetilde F^{-1}(0,0)$ and by our hypothesis this is included but not equal to $F_{0}^{-1}(0) = \widetilde F^{-1}(0,0)$ which is the central fibre of the map $\widetilde F$. We may therefore apply  \cite[Lemma 2.5]{JT} to the map $\widetilde F$ and  conclude that it is a locally open map.
\end{proof}

 \subsection{The discriminant of a deformation}\label{ss:discrim}
The image by $G$ of the singular locus $\Sing G$ of a map germ $G: (\bK^{m}, 0) \to (\bK^{s}, 0)$, $m\ge s\ge 2$,  supports the same discussion. It has been pointed out in \cite{JT} that this might not be well-defined as a set germ, and moreover,  this may happen even if $\im G$ is a well defined set germ.  

Nevertheless, the case of a deformation $F$ turns out to be more special; the next result tells that  the image  $\widetilde F (\Sing \widetilde F)$ is a well-defined set germ.  

\begin{proposition}\label{p:delta1}
If the dimension  of the target of the non-constant analytic map germ $G: (\bK^{m}, 0) \to (\bK^{s}, 0)$ is $s=2$,  then $G(\Sing G)$ is a well-defined set germ. It is either empty, or one point (the origin), or it is as follows: in the case $\bK = \bC$ it is a complex analytic curve, whereas in case $\bK = \bR$ it is a semi-analytic curve.

In particular, the image  $\Delta := \widetilde F (\Sing \widetilde F) $ is a well-defined set germ, that we will call
``discriminant of the deformation $F$''.    
\end{proposition}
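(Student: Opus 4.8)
The plan is to exploit the defining property of $\Sing G$: since the target has dimension $s=2$, the Jacobian $dG$ has rank at most $1$ at every point of $\Sing G$. The whole statement then rests on a single dimension gap, namely that the image $G(\Sing G)$, although sitting inside the $2$-dimensional target, has dimension at most $1$; from this both the curve structure and the well-definedness of the germ will follow. If $\Sing G = \emptyset$ the image is empty, so I assume $\Sing G \neq \emptyset$; as a set germ at the origin it then contains $0$, whence $0 = G(0) \in G(\Sing G)$.

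First I would prove the dimension bound. Stratifying the germ $\Sing G$ into smooth strata $S_\alpha$ (complex-analytic if $\bK=\bC$, semianalytic if $\bK=\bR$), for $p \in S_\alpha$ one has $\rank d_p(G|_{S_\alpha}) = \rank\bigl(d_p G|_{T_p S_\alpha}\bigr) \le \rank d_p G \le 1$, the last inequality because $p \in \Sing G$. Thus $G$ has rank $\le 1$ on every stratum; refining the strata into constant-rank pieces and applying the rank theorem presents each $G(S_\alpha)$ as a locally finite union of submanifolds of dimension $\le 1$, whence $\dim_0 G(\Sing G) \le 1$. Over $\bC$ one may argue more directly via the generic rank of $G$ on each irreducible component of $\Sing G$.

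Next I would identify the closure. Restricting $G$ to a small closed ball $\overline B_\varepsilon$ makes $\Sing G \cap \overline B_\varepsilon$ compact and $G$ proper onto its image. Over $\bC$, Remmert's proper mapping theorem (equivalently, Chevalley's theorem for the constructible image) shows that $\overline{G(\Sing G)}$ is a complex-analytic germ, which by the previous step is either $\{0\}$ or a curve. Over $\bR$, the image of a relatively compact semianalytic set under an analytic map is subanalytic, so $\overline{G(\Sing G)}$ is subanalytic of dimension $\le 1$; since a subanalytic subset of the plane of dimension $\le 1$ is semianalytic, the closure is a semianalytic curve (or a point).

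The main obstacle is the well-definedness of the germ, which in general fails for images of map germs, as the authors stress; here the dimension gap resolves it. The frontier $\overline{G(\Sing G)} \setminus G(\Sing G)$ is again constructible (resp. subanalytic) and of dimension strictly smaller than that of $G(\Sing G)$, hence $0$-dimensional, i.e. a locally finite set of points. Since $0 \in G(\Sing G)$, the origin is not a frontier point, so a small enough ball meets no frontier point and there $G(\Sing G)$ coincides with its closure. This simultaneously shows that the germ of $G(\Sing G)$ stabilizes, and identifies it with the analytic (resp. semianalytic) curve of the previous step. Applying all of this to $G = \widetilde F$, whose target $\bK \times \bK$ is $2$-dimensional, yields that $\Delta = \widetilde F(\Sing \widetilde F)$ is a well-defined set germ of the asserted type.
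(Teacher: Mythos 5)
Your dimension count ($\rank d_pG\le 1$ on $\Sing G$, hence $\dim G(\Sing G)\le 1$) is fine and plays the role that Sard's theorem plays in the paper's proof. The genuine gap is in the step you yourself flag as the main obstacle: well-definedness of the germ. Showing that, for each fixed $r$, the frontier of $A_r:=G(\Sing G\cap B_r)$ is a finite set avoiding the origin only proves that $A_r$ is closed near $0$ \emph{for that particular $r$}; it does not prove that the germ of $A_r$ at $0$ is independent of $r$, which is what ``well-defined set germ'' means and is the actual content of the proposition. The danger is precisely that a whole half-branch of the image through $0$ may be produced by points of $\Sing G$ lying near some $p_0\ne 0$ with $G(p_0)=0$; such points exist whenever $G^{-1}(0)\cap\Sing G$ is positive dimensional, which is exactly the situation for $\widetilde F$ (its central fibre contains $\Sing F_0$). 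Such a branch belongs to $A_r$ but could a priori be absent from $A_{r'}$ for $r'<r$, and nothing in your argument excludes this. Closing the gap requires, for instance, knowing that the closures $\overline{A_r}$ form a decreasing family of analytic (resp.\ semianalytic curve) germs and then invoking a descending chain argument --- which runs into your second problem: Remmert's proper mapping theorem does not apply as you invoke it, since $G$ restricted to $\Sing G\cap \overline{B_\varepsilon}$ is not a proper holomorphic map of analytic spaces (the set has boundary, and fibres of $G|_{\Sing G}$ over $0$ can be positive dimensional and reach the sphere $\partial B_\varepsilon$), so analyticity of $\overline{A_r}$ is not free; the counterexample $(x,y)\mapsto(x,xy)$ shows that closures of images of relatively compact pieces need not be analytic.

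The paper does not attempt any of this from scratch: in the complex case it quotes \cite[Theorem 3.2]{JT}, a dedicated and nontrivial theorem asserting that for a non-constant holomorphic germ with target $\bC^2$ the set $G(\Sing G)$ \emph{is} a well-defined analytic germ, and uses Sard only to bound its dimension; in the real case it gets semianalyticity from \L ojasiewicz's theorem (subanalytic subsets of $\bR^{2}$ are semianalytic) and obtains the $r$-independence by trapping $G(B_r\cap\Sing G)$ inside the discriminant of the complexification, which is a well-defined complex curve germ by the complex case. You should either cite such a result for the key well-definedness step or supply an actual stabilization argument for the family $r\mapsto \overline{A_r}$.
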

\begin{proof}
\noindent By \cite[Theorem 3.2]{JT},  for any non-constant holomorphic map germ $G$ of target $\bC^{2}$, the image $G(\Sing G)$  is a well-defined complex analytic set germ. Since the complement of this image is dense (by Sard's Theorem),  the image cannot have dimension 2.

In the setting of real deformations we do as follows: for any fixed ball $B_{r}$, the image $G (B_{r}\cap \Sing G)$ is a subanalytic subset of $\bR^{2}$, which is in fact semi-analytic as proved by \L ojasiewicz \cite{Lo} when the target is of  dimension 2. This set is included in the complex discriminant of $G$ viewed as a holomorphic function, which is a well-defined germ of a complex curve, as shown above. It then follows that 
$G(B_{r}\cap \Sing G)$ is a real curve and that it does not depend on the radius $r$ regarded
as a germs at the origin of $\bR^{2}$.  

This proof applies to $G := \widetilde F$,  which shows our second claim. 
\end{proof}

 \section{Tame deformations}\label{s:tamedefo}
 
 In the preceding section we have seen that the real setting is different from the complex one in what concerns the image of the map $\widetilde F$. However we are interested here in certain general fibres of the deformation, and therefore we will
 adapt the study to this more particular interest.
 
 \smallskip
 
\noindent\emph{\underline{Notation}} $\cH_{F}$.  
In the complex setting, $\cH_{F}$ will denote the full target $\bC^{2}$. 
In the real setting, according to the 3 situations in Proposition \ref{p:image}(a) and (b), we have: 
\begin{enumerate}
  \item $\cH_{F} := \bR^{2}$  if $\widetilde F$ is locally open.
  \item $\cH_{F}:= (\bR_{\ge 0} \times \bR, 0)$, or $\cH_{F}:=  (\bR_{\le 0} \times \bR, 0)$,  if $\widetilde F$ is not locally open but $\widetilde F(B_{r})$ contains this half-plane for any $r>0$, respectively.
\end{enumerate}
  
 \subsection{Local tube fibration}\label{ss:tube}
  We have seen that the discriminant $\Delta := \widetilde F(\Sing \widetilde F)$ is a well-defined set germ, cf Proposition
 \ref{p:delta1}.
  \begin{definition}\label{d:tube}
 We say that \emph{$\widetilde F$ has a local tube fibration},  also called \emph{Milnor-Hamm fibration},  if for any small enough $r >0$ there exists a radius $\delta\ll r$,  and a radius $\eta \ll \delta$ such that the restriction:
\begin{equation}\label{eq:tube}
  \widetilde F_{|} : \overline{B_{r}} \cap  \widetilde F^{-1}(\cH_{F}\cap (D_{\delta}\times D_{\eta})\m \Delta)   \to \cH_{F}\cap (D_{\delta}\times D_{\eta})\m \Delta
\end{equation}
is a locally trivial fibration which is independent of the chosen constants up to isotopy. 
\end{definition}

 The above definition sounds a bit different from the general definition of the Milnor-Hamm fibration in \cite{ART} since here we do not assume that the map germ $\widetilde F$ is ``nice'', i.e.,  both $\im \widetilde F$ and $\widetilde F (\Sing \widetilde F)$ to be well-defined as set germs at the origin.  Let us explain what happens in our particular situation.
  
 In the  holomorphic setting,  the map $\widetilde F$ is  automatically nice,  by Proposition \ref{p:image}.
  If the local tube fibration \eqref{eq:tube} exists then  the fibre $B_{r}\cap F_{t}^{-1}(s)$ is diffeomorphic to the fibre  $B_{r}\cap F_{0}^{-1}(s)$, for any $(s,t) \in (D_{\delta}\times D_{\eta})\m \Delta$ because this later set is connected. This means that one has a single fibre of \eqref{eq:tube}.
  
 
   In the real analytic setting, $\cH_{F}$ is included in the target of $\widetilde F$ by Proposition \ref{p:image}, and therefore the target of the map $\widetilde F_{|}$ defined by \eqref{eq:tube} is independent of the radius of the ball $B_{r}$.  As $\Delta$ is well-defined, it follows that $\im \widetilde F_{|}$ is well-defined as a set germ, and therefore $\widetilde F_{|}$ is a ``nice map'' in the sense of \cite{ART}. The set  $\cH_{F}\cap (D_{\delta}\times D_{\eta})\m \Delta$ consists of finitely many connected components $A_{1}, \ldots, A_{\zeta}$, each of them having a unique fibre $B_{r}\cap F_{t}^{-1}(s)$  for  $(s,t) \in A_{j}\subset \cH_{F}\cap  (D_{\delta}\times D_{\eta})\m \Delta$, up to diffeomorphisms.  
   
   In particular we have:
 
\begin{corollary}\label{c:trivbd}
 If the local tube fibration \eqref{eq:tube} exists, then  the deformation  $F(x,t)$ is a deformation with fibre constancy  (in the sense of Definition \ref{d:topconstantaway}).
\end{corollary}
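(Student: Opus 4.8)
The plan is to extract the fibrations required by Definition \ref{d:topconstantaway} from the single tube fibration \eqref{eq:tube}, by restricting its base to one-parameter families of regular values. The starting observation is that, since $\widetilde F = (F,t)$, for every $(s,t_{0})$ one has the identification of fibres
\[
\widetilde F^{-1}(s,t_{0}) \cap \overline{B_{r}} \ = \ \bigl( \overline{B_{r}} \cap F_{t_{0}}^{-1}(s) \bigr) \times \{t_{0}\},
\]
so that the fibres of \eqref{eq:tube} are exactly the fibres $\overline{B_{r}} \cap F_{t_{0}}^{-1}(s)$ occurring in \eqref{eq:faraway}. I would then fix the outer radius $\delta$ small enough that $\partial D_{\delta}$ consists of regular values of $F_{0}$, so that $\partial D_{\delta}\times\{0\}$ is off the discriminant $\Delta$; by continuity of $\partial F/\partial x$ together with the choice $\eta \ll \delta$, every $(s,t)$ with $|s|=\delta$ and $|t|\le\eta$ again avoids $\Delta$. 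Thus the whole boundary slab $\partial D_{\delta}\times D_{\eta}$ lies in the regular-value locus carrying the tube fibration.

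In the complex case I would conclude using connectedness. By Proposition \ref{p:delta1} the discriminant $\Delta$ is a complex-analytic curve, hence of real codimension two in $\bC^{2}$, so $(D_{\delta}\times D_{\eta})\setminus\Delta$ is connected and every fibre of \eqref{eq:tube} is diffeomorphic to every other. Restricting the locally trivial fibration \eqref{eq:tube} over the sub-base $\{|s|=\delta\}\times\{t_{0}\}\cong S^{1}$ reproduces precisely the fibration $(F_{t_{0}})_{|}$ of \eqref{eq:faraway}; as the circles $S^{1}\times\{t_{0}\}$, $|t_{0}|\le\eta$, sweep out the connected cylinder $S^{1}\times D_{\eta}$ over which the tube fibration is locally trivial, these fibrations are all isotopic, i.e. independent of $t$ (and, by the last clause of Definition \ref{d:tube}, of $r$ and $\delta$). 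This is fibre constancy.

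In the real case the base $\cH_{F}\cap(D_{\delta}\times D_{\eta})\setminus\Delta$ decomposes into finitely many connected components, over each of which the fibre of \eqref{eq:tube} is unique up to diffeomorphism. The point is that, whenever the endpoint $a_{+}$ lies in $\cH_{F}$, the segment $\{a_{+}\}\times[-\eta,\eta]$ is a connected subset of the regular-value locus, hence lies in a single component; local triviality of \eqref{eq:tube} along it then gives $B_{r}\cap F_{t}^{-1}(a_{+})\cong B_{r}\cap F_{0}^{-1}(a_{+})$ for all $|t|\le\eta$, and symmetrically for $a_{-}$. The invariance of the two fibres is exactly what Definition \ref{d:topconstantaway} demands over $\bR$ (an empty fibre, should one of $a_{\pm}$ lie outside $\cH_{F}$, being trivially invariant).

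The step I expect to be the main obstacle is the bookkeeping at the boundary of the disk: \eqref{eq:tube} is stated over the open polydisc minus $\Delta$, whereas \eqref{eq:faraway} lives over $\partial D_{\delta}$. I would handle this by verifying that $\partial D_{\delta}\times D_{\eta}$ consists of regular values and that, in the standard Milnor--Hamm / $\rho$-regular setup underlying \eqref{eq:tube}, the fibration extends by local triviality across the boundary sphere, so that restricting to $\{|s|=\delta\}$ is legitimate. Here the semi-analyticity of $\Delta$ (Proposition \ref{p:delta1}) is what guarantees both the finiteness of the components in the real case and the avoidance of $\Delta$ by $\partial D_{\delta}\times D_{\eta}$ once $\eta\ll\delta$.
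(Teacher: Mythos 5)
Your proposal is correct and follows essentially the same route as the paper: one verifies that the boundary slab $\partial D_{\delta}\times D_{\eta}$ avoids the discriminant $\Delta$ (the paper does this by noting that $\Sing \widetilde F$ meets $\{t=0\}$ only inside $F_{0}^{-1}(0)$, so $\Delta$ touches $\{t=0\}$ only at the origin of the target), whence \eqref{eq:faraway} is a sub-fibration of \eqref{eq:tube}, with independence in $t$, $r$, $\delta$ inherited from the tube fibration and the connected-component discussion preceding the corollary. Your additional bookkeeping (connectedness of the base over $\bC$, the two endpoints $a_{\pm}$ over $\bR$) just makes explicit what the paper records in the comments after Definition \ref{d:tube}.
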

\begin{proof}
 Let $B'_{r}\subset \bK^{n}$ be a small enough ball centred at the origin of radius $r>0$ such that it is a Milnor ball for the function germ $F_{0}$, and let $0<\delta\ll r$ be a corresponding Milnor disk. 
  As above, let $B_{r}\subset \bK^{n}\times \bK$ be the ball centred at the origin of the same radius $r$.
 
 Since $\Sing \widetilde F$ intersects the slice $\{t=0\}$ at $\Sing F_{0}\subset F^{-1}(0)$, it follows that   for  small enough $0<\eta \ll \delta$ we have:
 $$\Delta \cap \cH_{F}\cap  (\partial D_{\delta}\times  D_{\eta}) = \emptyset.$$
 
This shows that \eqref{eq:faraway} is a sub-fibration of \eqref{eq:tube} for any $t\in D_{\eta}$.
 \end{proof}
 
\begin{remark}
 The fibre  $B_{r}\cap  F_{0}^{-1}(s)$ is precisely a Milnor fibre of $F_{0}$, since we have assumed  that the ball $B_{r}$ is a Milnor ball for $F_{0}$, and it is diffeomorphic to  $B_{r}\cap F_{t}^{-1}(s)$.
Nevertheless
 $B_{r}\cap F_{t}^{-1}(s)$ is not necessarily the Milnor fibre of the function germ $F_{t}$. Indeed, the function germ $F_{t}$
 may require the choice of a smaller radius $r'<r$ such that $B_{r'}$ is Milnor ball for $F_{t}$ at 0.  This problem is well-known in deformations (see e.g. \cite{JiT}): for each $t\in D_{\eta}$ there is a ``maximum radius'' $r_{t}$ of the Milnor ball of the function germ $F_{t}$, and it is possible that $\lim_{t\to 0}r_{t} =0$, whereas $r_{0}:= r$ is a well-defined positive value.
\end{remark}

  
\smallskip

\begin{definition}(Milnor set)\label{d:milnorset}\ \\
Consider the square of the Euclidean distance function from the origin $\rho : \bK^{n+1} \to \bR_{\ge 0}$, and the map
$$(\widetilde F, \rho)_{|} : \widetilde F^{-1}((D_{\delta}\times D_{\eta})\m \Delta) \to \bK^{2} \times \bR_{\ge 0}
$$ 
The analytic set
$$M(\widetilde  F) := \overline{\Sing (\widetilde F, \rho)_{|} \setminus \widetilde F^{-1}(\Delta)}  $$ 
will be called here the \emph{Milnor set} of $\widetilde F$. 
\end{definition}

In the following we will tacitly consider the germ of $M(\widetilde F)$ at $(0,0)$, for which we use  the same notation.
 
 %
\begin{definition}(Tame deformations)\label{d:tamedefo}\ \\
 We say that the deformation $F_{t}(x) = F(x,t)$ of $F_{0}$ is \emph{tame} if and only if the following condition holds:
\begin{equation}\label{eq:rhoreg}
 M(\widetilde F)\cap \{t=0\} \cap \Sing F_{0} \subset \{(0,0)\}.
\end{equation}
\end{definition}
\begin{remark}\label{r:tame-proper}
 Condition \eqref{eq:rhoreg} is equivalent to the \emph{$\rho$-regularity} of $\widetilde F$ considered in \cite[(6)]{ART}, despite the fact that the definition of the Milnor set $M(\widetilde F)$ itself is slightly different from the one considered in \cite{ART}.  
\end{remark}

\begin{theorem}\label{t:jt}
 If the analytic deformation $F(x,t)$  of $F(x,0)$ is tame, then  $\widetilde F$ has a Milnor-Hamm tube fibration \eqref{eq:tube}, and the deformation  $F(x,t)$ is a deformation with fibre constancy  (in the sense of Definition \ref{d:topconstantaway}).
\end{theorem}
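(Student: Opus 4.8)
The plan is to reduce everything to the existence of the local tube fibration \eqref{eq:tube}, since the fibre constancy is then immediate from Corollary \ref{c:trivbd}. To build \eqref{eq:tube} I would follow the classical Milnor--L\^{e}--Hamm route: extract from tameness the transversality of all small spheres to the fibres of $\widetilde F$ over the punctured base, and then invoke Ehresmann. The first, cost-free, observation is that $\Sing\widetilde F\s\widetilde F^{-1}(\Delta)$ (by the definition of $\Delta$, Proposition \ref{p:delta1}), so the restriction of $\widetilde F$ to $\widetilde F^{-1}(\cH_F\cap(D_\delta\times D_\eta)\m\Delta)$ is automatically a submersion; only the behaviour at the bounding spheres is at stake.

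Fixing a Milnor radius $r$ for $F_0$ and then $0<\delta\ll r$ and $0<\eta\ll\delta$, the target is the emptiness
\[
M(\widetilde F)\cap\overline{B_r}\cap\widetilde F^{-1}(\cH_F\cap(D_\delta\times D_\eta)\m\Delta)=\emptyset .
\]
By Definition \ref{d:milnorset} this says exactly that every sphere $\partial B_\varepsilon$, $0<\varepsilon\le r$, is transverse to every fibre $\widetilde F^{-1}(s,t)$ over the punctured base. Granting it, $\overline{B_r}\cap\widetilde F^{-1}(\cH_F\cap(D_\delta\times D_\eta)\m\Delta)$ is a compact manifold with boundary $\partial B_r\cap\widetilde F^{-1}(\cH_F\cap(D_\delta\times D_\eta)\m\Delta)$, and the map $\widetilde F_|$ is proper (as $\overline{B_r}$ is compact), a submersion on the interior by the previous paragraph, and a submersion in restriction to the boundary sphere by the transversality just granted, so Ehresmann's theorem for manifolds with boundary yields the locally trivial fibration \eqref{eq:tube}.

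The heart of the matter is the emptiness above, and I would split it into two steps. First, on the central fibre I claim $M(\widetilde F)\cap\widetilde F^{-1}(0,0)\s\{(0,0)\}$: since $M(\widetilde F)$ is the closure of the tangency locus taken off $\widetilde F^{-1}(\Delta)$, any of its points lying over $(0,0)$ is a limit of genuine tangency points, and the standard \L ojasiewicz/curve-selection argument forces such limits into $\Sing\widetilde F\cap\{t=0\}=\Sing F_0$; tameness \eqref{eq:rhoreg} (equivalently, the $\rho$-regularity of $\widetilde F$, Remark \ref{r:tame-proper}) then leaves only the origin. Second --- and this is where I expect the main difficulty, and where the hierarchy $\eta\ll\delta\ll r$ is indispensable --- I would propagate the conclusion from the single base point $(0,0)$ to the whole punctured polydisc: transversality of the fibres to the compact spheres $\partial B_\varepsilon$ is an open condition, so once it holds over $(0,0)$ for the fixed $\overline{B_r}$ it persists over a neighbourhood of $(0,0)$ after excising $\Delta$. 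Controlling $M(\widetilde F)$ uniformly as $(s,t)\to(0,0)$ --- rather than only on the slice $\{t=0\}$ supplied by the hypothesis --- is precisely the delicate point, and it is handled by first fixing $\delta=\delta(r)$ small and only then choosing $\eta=\eta(r,\delta)$ still smaller.

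It remains to check that \eqref{eq:tube} is independent, up to isotopy, of the admissible constants $r,\delta,\eta$: for two admissible triples the transversality furnishes a vector field tangent simultaneously to the fibres of $\widetilde F$ and to the spheres, whose flow realises the required isotopy and matches the fibres over the various connected components of $\cH_F\cap(D_\delta\times D_\eta)\m\Delta$ described after Definition \ref{d:tube}. This establishes the Milnor--Hamm tube fibration for $\widetilde F$, and Corollary \ref{c:trivbd} then gives that $F$ is a deformation with fibre constancy, as claimed.
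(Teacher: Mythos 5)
Your overall architecture --- reduce to the existence of \eqref{eq:tube} via Corollary \ref{c:trivbd}, and obtain \eqref{eq:tube} from sphere-transversality plus Ehresmann --- is exactly the route of the paper, which simply delegates the second step to \cite[Lemma 3.3]{ART} after checking that $\widetilde F_{|}$ is ``nice''. However, the statement you isolate as ``the heart of the matter'', namely $M(\widetilde F)\cap\overline{B_r}\cap\widetilde F^{-1}(\cH_F\cap(D_\delta\times D_\eta)\m\Delta)=\emptyset$, is false whenever $M(\widetilde F)\neq\emptyset$ as a germ at the origin, i.e.\ in essentially every case of interest. Indeed, $M(\widetilde F)$ is by Definition \ref{d:milnorset} the closure of $\Sing(\widetilde F,\rho)_{|}\m\widetilde F^{-1}(\Delta)$; if it is non-empty as a germ, it contains tangency points $p_k\to 0$ with $\widetilde F(p_k)\to(0,0)$ and $\widetilde F(p_k)\notin\Delta$, and these lie in $\overline{B_r}\cap\widetilde F^{-1}(\cH_F\cap(D_\delta\times D_\eta)\m\Delta)$ for \emph{every} choice of $\delta,\eta>0$. (Concretely: the vanishing cycles of $F_t$ force tangencies of the fibres $F_t^{-1}(s)$ with small spheres near the origin, and no shrinking of $\delta$ and $\eta$ removes these from the interior of $B_r$.) Your ``propagation'' step cannot repair this: the transversality you want to propagate from the base point $(0,0)$ already fails at the origin of the source, and openness gives nothing uniform as one approaches it.

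What is true, and all that your Ehresmann argument actually uses, is emptiness on the bounding sphere only: $M(\widetilde F)\cap\partial B_r\cap\widetilde F^{-1}(\cH_F\cap(D_\delta\times D_\eta)\m\Delta)=\emptyset$ for $\delta,\eta$ small depending on $r$. This follows from tameness by compactness of $M(\widetilde F)\cap\partial B_r\cap\{|t|\le\eta_0\}$, combined with the observation that a limit tangency point on $\partial B_r$ over $(0,0)$ lying in $\bigl(F_0^{-1}(0)\m\Sing F_0\bigr)\times\{0\}$ is excluded by the choice of $r$ as a Milnor radius for $F_0$ together with the openness of transversality at submersion points of $\widetilde F$ --- not, as you assert, by a curve-selection argument forcing such limits into $\Sing\widetilde F$: limits of tangency points of smooth fibres with spheres need not be singular points of $\widetilde F$, so tameness \eqref{eq:rhoreg} alone (which only constrains $M(\widetilde F)\cap\{t=0\}\cap\Sing F_0$) does not directly control them. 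With the claim corrected to the boundary sphere, and the independence-of-constants discussion adjusted accordingly (transversality to all spheres $\partial B_\varepsilon$ with $r'\le\varepsilon\le r$ over a base shrunken in terms of $r'$), your proof coincides in substance with the argument of \cite{ART} that the paper invokes.
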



\begin{proof}
It was proved in  \cite[Lemma 3.3]{ART}: \emph{Let $G: (\bR^{m}, 0) \to (\bR^{p}, 0)$, $m \ge p > 1$, be a non-constant nice analytic map germ. If $G$ is $\rho$-regular then G has a Milnor-Hamm tube fibration.}  

We have seen in \S \ref{ss:imag} that in case of a deformation $F$, the condition ``nice'' (recalled after Definition \ref{d:tube})  holds in the holomorphic setting,
 thus the above result applies and yields our claim.

In the real setting, we have shown in the comments after Definition \ref{d:tube} that the condition ``nice'' holds for the map $\widetilde F_{|}$ in \eqref{eq:tube}.
The arguments of the proof  of  \cite[Lemma 3.3]{ART} work without  any modification.  Indeed, the $\rho$-regularity \eqref{eq:rhoreg} implies that  the map \eqref{eq:tube} is a proper submersion,  thus it is a locally trivial fibration over $\cH_{F}\cap (D_{\delta}\times D_{\eta})\m \Delta$,
 and  the independency of the tube fibration \eqref{eq:tube} follows from the definition.

Finally we may apply Corollary \ref{c:trivbd}, and our claim is proved in both  complex or real settings.

Let us note for the record that the tameness of the deformation $F$ is equivalent to  the fact that \eqref{eq:tube} is a proper stratified submersion independently of the chosen constants.
\end{proof}

 Although $M(\widetilde F)$ is an analytic set,  hence defined by equations, it is still not easy to compute it since one has to single out only those irreducible components  which are not included in $\widetilde F^{-1}(\Delta)$.  We  will display in the following two conditions which imply ``tameness'', one analytic and the other algebraic.

\section{A condition for the tameness}\label{s:parucrit}

\subsection{The partial Thom stratification}\label{ss:partialThom}

 Let us recall the \emph{$\partial$-Thom regularity}, after \cite[Def. 2.1]{Ti-compo}, \cite[A 1.1]{Ti-book}, \cite[\S 6]{DRT},  and  \cite[\S 4]{ART}, see also \cite{CJT}.
This is a  weaker condition than the usual Thom regularity, but sufficient for the existence of  the Milnor tube fibration (cf Definition \ref{d:tube}) in the general case of real map germs, see Proposition \ref{p:fib} below.  There are however examples where the map germ $G$ has Milnor tube fibration without being  $\partial$-Thom regular, see \cite{PT} and \cite{ART}. 

Let $G:(\bR^m, 0) \to (\bR^p,0)$ be a non-constant  analytic map germ, and let $\Delta$ denote its discriminant. 
For a given stratification of a neighbourhood of $0 \in \bR^{m}$, let  $A, B$ be strata such that $B\subset \bar A\m A$. One says that the pair $(A,B)$ satisfies the Thom (a$_G$)-regularity condition at $x\in B$,  if the following condition holds: for any $\{x_k\}_{k\in \bN}\subset A$ such that  $x_k\to x$, if the tangent space
$T_{x_k}(G_{|A})$ converges, when $k\to \ity$,  to a limit $H$ in the appropriate Grassmannian,  then $T_{x}(G_{|B})\subset H$. 

In the case of our function germ $F$, it is known that there exists a \emph{Thom (a$_{F}$)-regular stratification}  of  $F^{-1}(0)$, cf. Hironaka \cite{Hi}: any real or complex function germ $h$ admits a Thom (a$_{h}$)-regular stratification of its zero locus $h^{-1}(0)$; see also the discussion in \cite{ART}.
Moreover, in the complex analytic setting, a Whitney (b)-regular stratification of $h^{-1}(0)$ is also Thom (a$_{h}$)-regular. This was shown in \cite[Thm. 4.2.1]{BMM} with $\cD$-module techniques;  topological proofs can be found in  \cite{Pa-note} and \cite{Ti-compo}, see also \cite[Thm. A 1.1.7]{Ti-book}.

\begin{definition}\label{d:partialthom} (after \cite{ART})\ \\
We say that $G$ is \textit{$\partial$-Thom regular} if there is a ball $B^{m}_{r}$ centred at $0\in \bR^{m}$ and a semi-analytic stratification $\cS=\{S_{\alpha}\}$ of $B^{m}_{r}\cap G^{-1}(0)$ such that, for any stratum $S_\alpha$, the pair $(B^{m}_{r}\m G^{-1}(\Delta), S_\alpha)$  satisfies the Thom (a$_G$)-regularity condition. 

We also say that the stratification $\cS$ is a \emph{$\partial$-Thom} (a$_{G})$-\emph{stratification}.
\end{definition}
Let us also point out that any Thom regular stratification is obviously $\partial$-Thom regular.
 The difference is that we do not ask neither the Whitney (a)-regularity  condition between couples of strata $A,B$ inside $G^{-1}(0)$, nor the (a$_G$)-regularity for $A$ outside $G^{-1}(0)$ and $B$ inside $G^{-1}(0)$.

 The $\partial$-Thom regularity is however sufficient to insure the existence of the Milnor-Hamm fibration:
\begin{proposition}\label{p:fib}\cite[Prop. 4.2]{ART} and \cite[Cor. 5.8]{JT}. \label{th:tube}
 If $G$ is $\partial$-Thom regular,  then $G$ is $\rho$-regular and has a Milnor-Hamm fibration. \fin
\end{proposition}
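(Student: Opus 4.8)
The plan is to separate the proposition into its two conclusions, since one of them is essentially a citation and the other carries all the content. The Milnor-Hamm fibration follows at once from $\rho$-regularity by the mechanism already used in Theorem \ref{t:jt} (i.e. \cite[Lemma 3.3]{ART}): once $G$ is $\rho$-regular, the restriction of $G$ to a tube $\overline{B^m_r}\cap G^{-1}(D_\delta\m\Delta)$ is a proper submersion onto $D_\delta\m\Delta$ — proper because the ball is closed and transversality of the bounding sphere to the fibres keeps the boundary under control — hence a locally trivial fibration by Ehresmann. So I would concentrate entirely on proving the implication $\partial$-Thom regular $\Longrightarrow$ $\rho$-regular, where for a general germ $G$ the $\rho$-regularity means $M(G)\cap G^{-1}(0)\subset\{0\}$, with $M(G):=\overline{\Sing(G,\rho)_|\m G^{-1}(\Delta)}$ the Milnor set associated to the squared distance $\rho$.

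First I would argue by contradiction: suppose there is a sequence $x_k\in\Sing(G,\rho)_|\m G^{-1}(\Delta)$ converging to some $x_0\in G^{-1}(0)$ with $x_0\neq 0$. Since $x_k\notin G^{-1}(\Delta)$ and $\Delta\supset G(\Sing G)$, each $x_k$ is a regular point of $G$, so $G$ is a submersion there. The singularity condition for $(G,\rho)_|$ then reads $\grad\rho(x_k)=2x_k\in\mathrm{span}\{\grad G_1(x_k),\dots,\grad G_p(x_k)\}$, i.e. the radial vector $x_k$ is orthogonal to the fibre tangent space $T_{x_k}G^{-1}(G(x_k))$. Passing to a subsequence, I may assume these fibre tangent spaces converge in the Grassmannian to a limit $H$. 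Applying the Thom (a$_G$)-regularity of the pair $(B^m_r\m G^{-1}(\Delta),S_\alpha)$, where $S_\alpha$ is the stratum of the $\partial$-Thom $(a_G)$-stratification of $G^{-1}(0)$ containing $x_0$, I obtain $T_{x_0}S_\alpha\subset H$ (here $G_{|S_\alpha}\equiv 0$, so the relevant fibre of $G_{|S_\alpha}$ is $S_\alpha$ itself).

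Next I would take limits in the orthogonality relations: for every $v\in H$ choose $v_k\in T_{x_k}G^{-1}(G(x_k))$ with $v_k\to v$; from $\langle x_k,v_k\rangle=0$ one gets $\langle x_0,v\rangle=0$, so $x_0\perp H\supset T_{x_0}S_\alpha$. Thus the intrinsic gradient of $\rho|_{S_\alpha}$ at $x_0$, namely the orthogonal projection of $2x_0$ onto $T_{x_0}S_\alpha$, vanishes, i.e. $x_0\neq 0$ is a critical point of $\rho$ restricted to the stratum $S_\alpha$. Since $S_\alpha$ is a semi-analytic stratum of the analytic set $G^{-1}(0)$ through the origin, the classical transversality of small spheres to analytic (stratified) sets — Milnor's argument via the curve selection lemma, cf. \L ojasiewicz — guarantees that $\rho|_{S_\alpha}$ has no critical point in a punctured neighbourhood of $0$. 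This is the desired contradiction, and it shows that $M(G)$ does not accumulate on $G^{-1}(0)\m\{0\}$, which is exactly $\rho$-regularity.

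The hard part will be the limiting step and its bookkeeping in the Grassmannian: one must make sure that the across-the-boundary approach of $x_k$ (lying in the top stratum $B^m_r\m G^{-1}(\Delta)$) to $x_0\in S_\alpha\subset G^{-1}(0)$ is genuinely governed by the $\partial$-Thom condition rather than the full Thom condition, that the limit $H$ of the fibre tangents really contains $T_{x_0}S_\alpha$ with matching dimensions, and that the small-sphere transversality is invoked on the single smooth stratum $S_\alpha$ (smooth as a manifold even though it sits inside a possibly singular zero locus). Once these points are handled, the remainder is the standard Milnor mechanism, and the fibration statement is automatic from $\rho$-regularity as indicated above.
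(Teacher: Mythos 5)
The paper offers no proof of this proposition at all --- it is imported directly from \cite[Prop. 4.2]{ART} and \cite[Cor. 5.8]{JT} --- and your argument is a correct reconstruction of the standard proof of that cited result: the Thom-type limit argument gives $x_0\perp H\supset T_{x_0}S_\alpha$, so $x_0\neq 0$ would be a critical point of $\rho|_{S_\alpha}$, contradicting the transversality of small spheres to the semi-analytic strata of $G^{-1}(0)$ (curve selection lemma), and the fibration then follows from properness of the closed tube plus Ehresmann. This is exactly the mechanism the paper itself invokes at the end of the proofs of Theorems \ref{t:tame} and \ref{t:jt}; the points you flag as delicate (the Grassmannian limit and the inclusion $T_{x_0}S_\alpha\subset H$) are in fact immediate from Definition \ref{d:partialthom}, since $G_{|S_\alpha}\equiv 0$ makes $T_{x_0}(G_{|S_\alpha})=T_{x_0}S_\alpha$ with no dimension matching required.
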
 

\subsection{Theorem and its proof}\label{ss:thm}

Our $\bK$-analytic deformation  of $F_{0}$ has by definition the following presentation:
 \begin{equation}\label{eq:present}
 F(x,t) =f_0(x)+\sum_{j\geq 1}^\infty t^jf_j(x)
 \end{equation}
 where, for any $j\ge 0$, $f_j$ is a $\bK$-analytic function germ at 0 of variable $x\in \bK^{n}$, with $f_j(0)=0$, since we have assumed that  $F_{t}$ is a function germ at the origin, and thus $F_{t}(0) = 0$, for any $t$ close enough to 0.

\begin{theorem}\label{t:tame}
Let $F_{t}(x) =F(x,t)$ be an analytic deformation of $F_{0}$ which satisfies the condition \eqref{eq:cond}  of Definition \ref{d:flat}.
 Then the deformation $F(x,t)$ of $F_{0}(x)$ is $\partial$-Thom regular,  and therefore tame.  
  \end{theorem}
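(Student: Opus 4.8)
The plan is to manufacture a $\partial$-Thom $(\mathrm{a}_{\widetilde F})$-stratification of $\widetilde F^{-1}(0,0)=F_0^{-1}(0)\times\{0\}$ from a Thom stratification of $F_0^{-1}(0)$, and to verify the regularity pairs using condition \eqref{eq:cond}. First I would invoke Hironaka \cite{Hi} to fix a Thom $(\mathrm{a}_{F_0})$-regular stratification $\{S_\alpha^0\}$ of $F_0^{-1}(0)\subset\bR^n$, refined so that $\{0\}$ is a stratum, and set $S_\alpha:=S_\alpha^0\times\{0\}$ together with the $0$-dimensional stratum $\{(0,0)\}$; these stratify $\widetilde F^{-1}(0,0)$. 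By Definition \ref{d:partialthom} it then suffices to show that each pair $(B_r\setminus\widetilde F^{-1}(\Delta),\,S_\alpha)$ is Thom $(\mathrm{a}_{\widetilde F})$-regular at every point $(x_0,0)$ of $S_\alpha$.

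Next I would reduce this to a single perpendicularity statement about limiting directions of $\nabla_x F:=(\partial F/\partial x_1,\dots,\partial F/\partial x_n)$. Since $A:=B_r\setminus\widetilde F^{-1}(\Delta)$ is open and meets no point of $\Sing\widetilde F$ (because $\Sing\widetilde F\subset\widetilde F^{-1}(\Delta)$), along any sequence $(x_k,t_k)\in A$ one has $\nabla_x F(x_k,t_k)\ne 0$ and
$$T_{(x_k,t_k)}\bigl(\widetilde F_{|A}\bigr)=\ker d\widetilde F_{(x_k,t_k)}=\bigl\{(v,0):\langle v,\nabla_x F(x_k,t_k)\rangle=0\bigr\}.$$
As $T_{(x_0,0)}S_\alpha=T_{x_0}S_\alpha^0\times\{0\}$, and since $\ker d\widetilde F$ does not involve $\partial F/\partial t$ (the defining equations force the last coordinate to vanish), the required inclusion $T_{(x_0,0)}S_\alpha\subset H$ for the limit $H=\lim_k\ker d\widetilde F_{(x_k,t_k)}$ is equivalent to the single condition
$$w\perp T_{x_0}S_\alpha^0,\qquad w:=\lim_k\frac{\nabla_x F(x_k,t_k)}{\|\nabla_x F(x_k,t_k)\|}.$$
By the curve selection lemma I may assume the sequence lies on a real-analytic arc $\gamma(\tau)=(x(\tau),t(\tau))\in A$ with $\gamma(0)=(x_0,0)$, along which all relevant limits are read off from leading Puiseux coefficients.

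Two of the three types of strata are immediate. For $\{(0,0)\}$ the tangent space is $\{0\}\subset H$, so the origin excepted by \eqref{eq:cond} causes no trouble. For a stratum $S_\alpha^0$ in the smooth part of $F_0^{-1}(0)$ one has $\nabla F_0(x_0)\ne 0$, hence $\nabla_x F$ stays close to $\nabla F_0(x_0)$ near $(x_0,0)$, forcing $w=\nabla F_0(x_0)/\|\nabla F_0(x_0)\|$ and $T_{x_0}S_\alpha^0=(\nabla F_0(x_0))^\perp$, so $w\perp T_{x_0}S_\alpha^0$ automatically. The substantial case is a stratum $S_\alpha^0\subset\Sing F_0\setminus\{0\}$, on which $\nabla F_0$ vanishes; there $x_0\ne 0$, so \eqref{eq:cond} is available along $\gamma$. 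Writing $\nabla_x F(\gamma(\tau))=\nabla F_0(x(\tau))+R(\tau)$, with $R(\tau)=O(t(\tau))$ the contribution of the higher-order terms of the deformation, I would split according to the Puiseux orders of $\nabla F_0(x(\tau))$ and $R(\tau)$: when $\|R(\tau)\|=o(\|\nabla F_0(x(\tau))\|)$ the limiting direction $w$ agrees with that of $\nabla F_0(x(\tau))$ and the Thom $(\mathrm{a}_{F_0})$-regularity of $\{S_\alpha^0\}$ (applied to the stratum carrying $x(\tau)$) yields $w\perp T_{x_0}S_\alpha^0$; in the complementary regime, where $x(\tau)$ runs inside $\Sing F_0$ so that $\nabla F_0(x(\tau))\equiv 0$ and $w$ is a limiting direction of $R(\tau)$ alone, I would use the integral-dependence form of \eqref{eq:cond} (the valuative inequality $|\partial F/\partial t|\le c\,\|\nabla_x F\|$ along $\gamma$, cf. Proposition \ref{p:integral}) to control the leading term of $R$ and again force the perpendicularity. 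This last, degenerate regime is where I expect the main difficulty to lie: one must rule out that the deformation tilts the limiting tangent hyperplane of the fibres off the stratum, and it is precisely here that the bound on $\partial F/\partial t$ by the relative Jacobian ideal is indispensable.

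Once $\partial$-Thom regularity is established, the conclusion is formal. By Proposition \ref{p:fib} the map $\widetilde F$ is $\rho$-regular and admits a Milnor-Hamm tube fibration, and by Remark \ref{r:tame-proper} the $\rho$-regularity \eqref{eq:rhoreg} is exactly the tameness of $F$ in the sense of Definition \ref{d:tamedefo}. Hence the deformation is tame, as claimed.
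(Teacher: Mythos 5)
Your overall architecture (stratify $\widetilde F^{-1}(0,0)$, reduce Thom $(\mathrm{a}_{\widetilde F})$-regularity to the perpendicularity of $w=\lim \nabla_x F/\|\nabla_x F\|$ to the stratum, pass to an analytic arc) is sound, and your treatment of the smooth strata and of $\{(0,0)\}$ is fine. The gap is exactly where you locate it: for a stratum inside $\Sing F_0\setminus\{0\}$ you never prove the perpendicularity in the regime where $\nabla F_0(x(\tau))$ does not dominate the deformation term $R(\tau)$. Condition \eqref{eq:cond} is a purely \emph{scalar} comparison, $|\partial F/\partial t|\le c\,\|\nabla_x F\|$; it carries no information whatsoever about the \emph{direction} of $\nabla_x F$ or of $R(\tau)$, so it cannot by itself ``control the leading term of $R$ and force the perpendicularity''. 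Moreover your dichotomy is not exhaustive: the complement of ``$\|R\|=o(\|\nabla F_0(x(\tau))\|)$'' is not ``$x(\tau)$ runs inside $\Sing F_0$''; it also contains arcs along which $\nabla F_0(x(\tau))\not\equiv 0$ but has order greater than or equal to that of $R(\tau)$, and there neither of your two sub-arguments applies. Note also that Thom $(\mathrm{a}_{F_0})$-regularity of $\{S_\alpha^0\}$ only constrains limits taken inside the slice $t=0$, so it gives no direct control over sequences with $t_k\neq 0$.

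The paper closes this gap with a different input: it starts not from a Thom stratification of $F_0$ in $n$ variables but from a Thom $(\mathrm{a}_{F})$-regular stratification $\cS$ of the \emph{full} $(n+1)$-variable function $F(x,t)$ (Hironaka), slices it by $t=0$, and refines to a Whitney (a)-regular stratification $\cS'$ of $F_0^{-1}(0)$. Thom regularity for $F$ gives $\lim T_{(x_t,t)}F^{-1}(s_t)\supset T_{(y,0)}V\supset T_{(y,0)}V'$ for the $n$-dimensional fibres of $F$. If $(\mathrm{a}_{\widetilde F})$ failed at $(y,0)$, a dimension count would force $\lim T_{(x_t,t)}F^{-1}(s_t)=\bR^n\times\{0\}$, i.e.\ the normalized full gradient tends to $(0,\dots,0,\pm 1)$, hence $\|\nabla_x F\|/\|(\nabla_x F,\partial F/\partial t)\|\to 0$; this contradicts \eqref{eq:cond}, which bounds that ratio below. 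So \eqref{eq:cond} enters only as a size comparison between $\nabla_x F$ and the full gradient, while all directional information comes from the Thom stratification of $F$ itself --- this is the idea your proof is missing. To salvage your approach you would have to replace your product stratification $S_\alpha^0\times\{0\}$ by (a refinement of) the $t=0$ slice of a Thom stratification of $F$, at which point you essentially recover the paper's argument.
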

\begin{proof}
Let  us fix a semi-analytic Whitney stratification $\cS$ of  $\bK^{n}\times \bK$ which is Thom (a$_{F}$)-regular, thus $\Sing F$ is a union of strata. Let  $(y,0)\in \bK^{n}\times \bK$, with $y\not= 0$,  be a point on some stratum $V\in \cS$, $V\subset \Sing F \subset  F^{-1}(0)$.  That $(y,0)\in \Sing F$  is equivalent to $y\in \Sing F_{0}\cap \bigl\{\frac{\partial F}{\partial t}(x,0) =0\bigr\}\m \{0\}$.  Moreover, this is also equivalent to  $y\in \Sing F_{0}\m \{0\}$. Indeed, the inequality \eqref{eq:cond} applied at $y\in \Sing F_{0}\m \{0\}$ shows that $y\in  \bigl\{\frac{\partial F}{\partial t}(x,0) =0\bigr\}$, which proves the inclusion: 
\begin{equation}\label{eq:inclsing}
 \Sing F_{0}\m \{0\}  \subset \biggl\{\frac{\partial F}{\partial t}(x,0) =0\biggr\}\m \{0\}.
\end{equation}
Let us remark that this proof also shows that condition  \eqref{eq:cond} implies condition \eqref{eq:cond0}. 

\smallskip

Let now $T_{(x_{t},t)}F^{-1}(s_{t})$ denote the tangent space at some smooth point $(x_{t},t)$ of the fibre, i.e. $(x_{t}, t)\not\in \Sing F$ and  $s_{t}:=F(x_{t},t)$. 
 The assumed $\partial$-Thom (a$_{F}$)-regularity condition at $(y,0)$ amounts to the following property: 
 for any choice of a sequence $(x_{t},t)\to (y,0)$ such that $(x_{t},t)\not\in \Sing F$, we have the inclusion:
 
\begin{equation}\label{eq:incl1}
  \lim_{(x_{t},t)\to (y,0)}T_{(x_{t},t)}F^{-1}(s_{t}) \supset T_{(y,0)}V,
\end{equation}
where we may assume without loss of generality that the limit exists in the appropriate Grassmannian.

We  now consider the slice of the stratification $\cS$ by $t=0$, consisting of the sets $V':=V\cap \{t=0\}$ for all $V\in \cS$. There exists the roughest semi-analytic Whitney (a)-regular stratification $\cS'$ of the central fibre $\widetilde F^{-1}(0,0) = F_{0}^{-1}(0)$ which refines this slice stratification, thus the sets $V'$ are unions of strata of $\cS'$.

\begin{lemma}
 If   condition \eqref{eq:cond} holds, then $\cS'$ is a $\partial$-Thom stratification for the map $\widetilde F$. 
\end{lemma}


\begin{proof}
We consider the fibres of $F_{t}$ for all $t$ close enough to 0, 
and make them converge to the central fibre $F_{0}^{-1}(0)$. 
We need to prove the $\partial$-Thom (a$_{\widetilde F}$)-regularity condition at the point $(y,0)$, which amounts to showing that
for any choice of a sequence $(x_{t},t)\to (y,0)$ such that $(x_{t},t)\not\in \Sing F$ and $s_{t}:= F_{t}(x_{t})$, we have the inclusion:
\begin{equation}\label{eq:incl2}
  \lim_{(x_{t},t)\to (y,0)}T_{(x_{t},t)}(F_{t}^{-1}(s_{t})\times\{t\}) \supset T_{(y,0)}V',
\end{equation}
where by $V'$ we denote the stratum of $\cS'$ which contains $(y,0)$, in particular we have $T_{(y,0)}V\supset  T_{(y,0)}V'$.

We will deduce \eqref{eq:incl2} from the inclusion \eqref{eq:incl1} via the condition \eqref{eq:cond}.
Let us suppose by \emph{reductio ad absurdum} that we have:
\begin{equation}\label{eq:nonincl}
\lim_{(x_{t},t)\to (y,0)}T_{(x_{t},t)}(F_{t}^{-1}(s_{t})\times\{t\})\not \supset T_{(y,0)}V'.
\end{equation}
Both sides are vector subspaces 
of $\bR^n\times\{0\}$, the left hand side has dimension  $n-1$, and the right hand side has dimension $\dim T_{(y,0)}V'\geq 1$.
Our assumption \eqref{eq:nonincl} implies the equality: 
\begin{equation}\label{eq:incl3}
 T_{(y,0)}V'+ \lim_{(x_{t},t)\to (y,0)}T_{(x_{t},t)}(F_{t}^{-1}(s_{t})\times\{t\})=\bR^n\times\{0\}.
\end{equation}

On the other hand, both sides of  \eqref{eq:nonincl} are included in 
$\lim_{(x_{t},t)\to (y,0)}T_{(x_{t},t)}F^{-1}(s_{t})$ and the later has dimension $n$. 

By dimension reasons, we therefore get from \eqref{eq:incl3} the equality:
$$\lim_{(x_{t},t)\to (y,0)}T_{(x_{t},t)}F^{-1}(s_{t})=\bR^n\times\{0\}.$$
This equality is equivalent to:
\begin{equation}\label{eq:vector}
\lim_{(x_{t},t)\to (y,0)}\frac{1}{\| \frac{\partial F}{\partial x_1},\dots,\frac{\partial F}{\partial x_n},\frac{\partial F}{\partial t}\|}
\biggl(\frac{\partial F}{\partial x_1},\dots,\frac{\partial F}{\partial x_n},\frac{\partial F}{\partial t}\biggr)=(0,\dots,0,\pm 1),
\end{equation}
in particular we have $\lim_{(x_{t},t)\to (y,0)} \frac{\bigl|\frac{\partial F}{\partial t}\bigr|}{\|\frac{\partial F}{\partial x_1},\dots,\frac{\partial F}{\partial x_n},\frac{\partial F}{\partial t}\|} =1$.

The inequality   \eqref{eq:cond} tells now that if we divide it by $\bigl\|(\frac{\partial F}{\partial x_1},\dots,\frac{\partial F}{\partial x_n},\frac{\partial F}{\partial t})\bigr\|$ then, by taking the limit,  we get the inequality:

  \[  \lim_{(x_{t},t)\to (y,0)} \biggl\| \frac{\partial F}{\partial x_1},\dots,\frac{\partial F}{\partial x_n}\biggr\|
  / \biggl\| \frac{\partial F}{\partial x_1},\dots,\frac{\partial F}{\partial x_n},\frac{\partial F}{\partial t}\biggr\|
     \ge \frac{1}{c_{y}}>0
  \]
This  tells that the first $n$ entries in  \eqref{eq:vector} cannot be all zero. We have thus shown the inclusion \eqref{eq:incl2}, which ends our proof  that the Thom (a$_{\widetilde F}$)-regularity holds at the point $(y,0)$.
\end{proof}


 The existence of the partial Thom stratification $\cS'$ for the map $\widetilde F$ implies that there is  $R>0$ such that,  for any positive $r\le R$, the sphere $S_{r}\subset \bK^{n}$ is transversal to all  positive dimensional strata of $\cS'$. As a direct consequence of the definition \eqref{eq:incl1}, it follows that
   the sphere $S_{r}$ is transversal to the smooth nearby fibres of $\widetilde F$ as in \eqref{eq:tube}, and therefore $F$ is tame (and in particular, by Theorem \ref{t:jt},  the tube fibration \eqref{eq:tube} exits).
 This ends the proof of Theorem \ref{t:tame}.
\end{proof}



\section{The Jacobian criterium}\label{s:jacocrit}


Before introducing the Jacobian criterion for tameness, let us first recall the \emph{integral closure} of an ideal of germs of analytic functions. We denote by $\cA_n$ the ring of germs of analytic functions at the origin on $\bK^n$. 

\begin{definition}[\cite{Te}, \cite{Ga}]\label{d:integral}
The  integral closure of  an ideal $J\subset \cA_n$, denoted by $\overline J$, is the set of all  $f\in \cA_n$
such that for any analytic arc $\mu:(\bK,0)\to(\bK^n,0)$ one has $\mu^{*}f \in(\mu^{*}J)\cA_1$, equivalently:
$$\ord_{s}f(\mu(s))\geq\min\{\ord_{s}h(\mu(s)) \mid h\in J\}.$$
\end{definition}

\begin{remark}\label{r:integral}
The above definition was introduced by Gaffney \cite{Ga}  in the real analytic setting.
In the complex analytic setting, it was proved by Teissier \cite{Te}
that the usual algebraic definition of the integral closure is equivalent to the above one. 
\end{remark}

\begin{proposition}\label{p:integral}{\cite[1.3.1, Proposition 1]{Te-LN},  \cite[Proposition 4.2]{Ga}}\ \\
 Let $J$ be an ideal of $\cA_n$. Then 
$f\in\overline J$ if and only if, for every set of generators $\{g_j\}$ of $J$, there exists a neighbourhood 
$U$ of $0\in\bK^n$, and a constant $c >0$, such that $|f(x)|\leq  c\cdot \sup_j|g_j(x)|$ for all $x\in U$.
\fin
\end{proposition}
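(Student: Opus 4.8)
The plan is to fix once and for all a finite set of generators $\{g_1,\dots,g_k\}$ of $J$ (possible since $\cA_n$ is Noetherian) and to set $\theta(x):=\sup_j|g_j(x)|$. Any two finite generating sets are related by matrices with entries in $\cA_n$, which are bounded near $0$, so the validity of $|f|\le c\,\theta$ for one generating set is equivalent, with a possibly different constant, to its validity for every other; the arc condition of Definition \ref{d:integral} is manifestly generator-independent as well. Hence it suffices to prove the two implications ``inequality $\Rightarrow$ arc condition'' and ``arc condition $\Rightarrow$ inequality'' for this single $\theta$.

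For the first (easy) implication, I would assume $|f(x)|\le c\,\theta(x)$ on a neighbourhood $U$ of $0$. Given any $\bK$-analytic arc $\mu:(\bK,0)\to(\bK^n,0)$, for $|s|$ small we have $\mu(s)\in U$, whence $|f(\mu(s))|\le c\sup_j|g_j(\mu(s))|$. Writing each $g_j(\mu(s))$ as a power series in $s$ gives $\ord_s\bigl(\sup_j|g_j(\mu(s))|\bigr)=\min_j\ord_s g_j(\mu(s))$, so taking orders in $s$ yields $\ord_s f(\mu(s))\ge\min_j\ord_s g_j(\mu(s))$. Since the $g_j$ generate $J$, the right-hand side equals $\min\{\ord_s h(\mu(s))\mid h\in J\}$, and therefore $f\in\overline J$.

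For the converse I would argue by contraposition and manufacture an arc violating the order inequality. Suppose the inequality fails for every constant, i.e. there are points arbitrarily close to $0$ with $|f(x)|>c\,\theta(x)$ for arbitrarily large $c$; at such points $f\ne 0$. In the real case, consider the continuous subanalytic function $q(x):=\theta(x)/|f(x)|$ on the subanalytic set $\{f\ne 0\}$ near $0$: the failure of the inequality says exactly that the closure of the graph of $q$ contains $(0,0)$. The \emph{curve selection lemma for subanalytic sets} then produces a real-analytic arc $\mu(s)$ with $\mu(0)=0$, $f(\mu(s))\ne 0$ for $s\ne 0$, and $q(\mu(s))\to 0$, i.e. $|f(\mu(s))|/\theta(\mu(s))\to\infty$. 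Comparing orders along $\mu$ gives $\ord_s f(\mu(s))<\ord_s\theta(\mu(s))=\min_j\ord_s g_j(\mu(s))$, contradicting $f\in\overline J$; this settles the real case.

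The main obstacle is the complex case, where Definition \ref{d:integral} tests $f$ only against \emph{holomorphic} arcs: a real-analytic arc extracted from the graph of the non-holomorphic $q$ need not be the trace of a holomorphic arc, so it does not contradict the complex arc condition (indeed the real-arc condition is a priori stronger than the complex one). Here I would instead invoke the normalized blow-up $\pi:X\to(\bC^n,0)$ of $J$, on which $J\cdot\cO_X$ is invertible, locally generated by one of the $\pi^*g_j$. The complex arc condition, via the valuative criterion over the normal space $X$, forces $\pi^*f\in J\cdot\cO_X$, whence on each chart $|\pi^*f|\le C\sup_j|\pi^*g_j|$ with a uniform $C$ on a neighbourhood of the compact fibre $\pi^{-1}(0)$; since $\pi$ is proper and surjective onto a neighbourhood of $0$, pushing this bound down gives $|f(x)|\le c\,\theta(x)$ near $0$. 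This is precisely the step where the holomorphic structure is genuinely used, which is why the complex statement is attributed to Teissier and the real one to Gaffney; I expect reconciling the two settings — equivalently, producing a \emph{holomorphic} rather than merely real-analytic witnessing arc — to be the only delicate point.
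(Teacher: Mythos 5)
The paper does not actually prove this proposition: it is quoted as a known result, with references to \cite{Te-LN} and \cite{Ga}, and the box after the statement signals that no argument is supplied. Your reconstruction follows the classical proofs from precisely those sources, and as a sketch it is sound. The easy direction and the generator-independence via bounded transition matrices are complete; the real converse, applying the subanalytic curve selection lemma to the closure of the graph of $\theta/|f|$, is Gaffney's argument (you should just record that the failure of the inequality for every $c$ makes $(0,0)$ a non-isolated point of that closure, and dispose of the degenerate case where every $g_j$ vanishes identically along the selected arc: there $\min_j\ord_s g_j(\mu(s))=\infty$, so the arc condition would force $f\circ\mu\equiv 0$, again a contradiction). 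Your diagnosis of the complex case is also the right one --- a real-analytic arc does not test the holomorphic arc condition of Definition \ref{d:integral}, and the normalized blow-up is the correct remedy. The only step you assert rather than prove, namely that the complex arc condition forces $\pi^*f\in J\cdot\cO_X$, is itself the substantive content of the Lejeune--Teissier valuative criterion: one must show that if $\pi^*f\notin J\cdot\cO_X$ at some $\xi\in\pi^{-1}(0)$ then, $J\cdot\cO_X$ being locally principal on the normal space $X$, the quotient $\pi^*f/\pi^*g_{j_0}$ is unbounded near $\xi$ by Riemann extension on normal spaces, and one must then manufacture a holomorphic arc in $X$ \emph{centred over the origin} whose image under $\pi$ violates the order inequality downstairs; that selection is the genuinely delicate point and needs either a full argument or an explicit citation. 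Since the paper delegates the entire proposition to \cite{Te-LN} and \cite{Ga}, your proposal is, if anything, more detailed than the paper's own treatment.
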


\begin{corollary}\label{c:integral} 
Let $f\in J$,  let $\{g_j\}$ be a set of generators of $J$, let $U$ be as in Proposition \ref{p:integral}, and let $x_0\in U$ such that $f(x_0)=0$. Then, for any real analytic arc $\mu:(\bR,0)\to(\bR^n,x_0)$, one has $\ord_{s}f(\mu(s))\geq\min_j\{\ord_{s}g_j(\mu(s))\}.$

In particular,  the equality of zero sets $\cZ(J)=\cZ(\overline J)$ holds.
\fin
\end{corollary}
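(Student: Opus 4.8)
The plan is to deduce both assertions from the metric inequality of Proposition \ref{p:integral}, by restricting it to the arc $\mu$ and converting the resulting size estimate into an estimate on orders of vanishing. First I would record that $J\subseteq\overline J$: if $f=\sum_j a_j g_j\in J$, then along any arc $\mu^{*}f=\sum_j(\mu^{*}a_j)(\mu^{*}g_j)\in(\mu^{*}J)\cA_1$, so $f$ satisfies Definition \ref{d:integral}. Hence Proposition \ref{p:integral} applies to $f$ and furnishes the neighbourhood $U$ and a constant $c>0$ with $|f(x)|\le c\sup_j|g_j(x)|$ for all $x\in U$. Since $U$ is open, $\mu(0)=x_0\in U$ and $\mu$ is continuous, we have $\mu(s)\in U$ for all $s$ in a small enough interval about $0$, so substituting $x=\mu(s)$ gives $|f(\mu(s))|\le c\,\sup_j|g_j(\mu(s))|$ for $s$ near $0$.

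The core step is the elementary equivalence, for a real analytic function $\phi$ of one variable, between its decay rate and its order of vanishing at $0$: an estimate $|\phi(s)|\le C|s|^{k}$ near $0$ forces $\ord_s\phi\ge k$. (If $\phi\equiv 0$ the order is $+\infty$; otherwise one writes $\phi(s)=a s^{\ell}+\cdots$ with $a\ne 0$ and $\ell=\ord_s\phi$, and a direct comparison gives $\ell\ge k$.) I would set $m:=\min_j\{\ord_s g_j(\mu(s))\}$. Each composite $g_j\circ\mu$ then has order $\ge m$, hence $|g_j(\mu(s))|\le C_j|s|^{m}$ near $0$; since there are only finitely many generators, $\sup_j|g_j(\mu(s))|\le C|s|^{m}$ with $C:=\max_j C_j$. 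Feeding this into the inequality of the previous paragraph yields $|f(\mu(s))|\le cC|s|^{m}$, and the equivalence then gives $\ord_s f(\mu(s))\ge m$, which is exactly the claimed inequality. The degenerate case $m=+\infty$ (all $g_j\circ\mu$ identically zero) is immediate, since then the right-hand side vanishes identically and so does $f\circ\mu$.

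For the final assertion $\cZ(J)=\cZ(\overline J)$, the inclusion $\cZ(\overline J)\subseteq\cZ(J)$ is immediate from $J\subseteq\overline J$. For the reverse inclusion I would argue directly from the same metric bound rather than through arcs: if $x_0\in\cZ(J)$ then $g_j(x_0)=0$ for every generator, so for any $f\in\overline J$ Proposition \ref{p:integral} gives $|f(x_0)|\le c\sup_j|g_j(x_0)|=0$, whence $f(x_0)=0$ and $x_0\in\cZ(\overline J)$.

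I do not expect a serious obstacle, as the statement is a direct consequence of Proposition \ref{p:integral}. The only points requiring care are precisely those that make the passage from sizes to orders legitimate: the finiteness of the generating set (so that the supremum is a maximum of finitely many $O(|s|^{m})$ terms, with the finitely many constants $C_j$ admitting a common bound), and the fact that the arc eventually enters the open neighbourhood $U$ on which the inequality of Proposition \ref{p:integral} holds. The hypothesis $f(x_0)=0$ is not strictly needed for the order inequality itself, but it is exactly the condition being tested in the zero-set equality.
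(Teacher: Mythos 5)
Your proof is correct, and it follows exactly the route the paper intends: the corollary is stated with no written proof (only the end-of-proof box), being treated as an immediate consequence of the metric inequality in Proposition \ref{p:integral}, which is precisely what you spell out by restricting that inequality to the arc and comparing orders of vanishing. The details you supply (the arc eventually lying in $U$, the finiteness of the generating set, the degenerate case $m=+\infty$, and the direct pointwise argument for $\cZ(J)\subseteq\cZ(\overline J)$) are all sound.
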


Coming back to our deformation $F(x,t)$, we shall work with the presentation \eqref{eq:present} of \S\ref{ss:thm}, that we recall here: 
 
\begin{equation}\label{eq:pres}
  F(x,t) =f_0(x)+\sum_{j\geq 1}^\infty t^jf_j(x).
\end{equation}

Let us consider the following condition:

\begin{equation}\label{eq:cond2}
\left\{
\begin{aligned}
\quad &  \mbox{For any } x_{0}\in \Sing F_{0} \cap  \biggl\{\frac{\partial F}{\partial t}(x,0) =0\biggr\}, \mbox{ there is }  c_{x_{0}} >0  \mbox{ such that }  \\
 \quad  &   \biggl| \frac{\partial F}{\partial t} \biggr| \le  c_{x_{0}} \biggl\| \frac{\partial F}{\partial x_{1}}, \ldots , \frac{\partial F}{\partial x_{n}}  \biggr\|   \  \mbox{ when } (x,t) \to (x_{0},0).
\end{aligned}
\right.
\end{equation}

\begin{proposition}\label{p:delta2}
If condition \eqref{eq:cond2}  holds, then 
one has the equality of set germs $\Sing \widetilde F = \Sing F$ and,
in particular,  $\Delta = \{0\} \times \bK$. 

We have the following implications: 
\[ \eqref{eq:cond2}  \Longrightarrow  \eqref{eq:cond} \Longrightarrow \eqref{eq:cond0}.\]
\end{proposition}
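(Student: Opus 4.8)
The plan is to prove the set-germ equality $\Sing\widetilde F=\Sing F$ first, and then read off the description of $\Delta$ and the whole chain of implications from it. Since $\Sing\widetilde F=Z(\partial_{x_1}F,\ldots,\partial_{x_n}F)$ while $\Sing F=Z(\partial_{x_1}F,\ldots,\partial_{x_n}F,\partial_t F)$, the inclusion $\Sing F\subseteq\Sing\widetilde F$ is automatic, so only the reverse inclusion needs work. For this I would apply condition \eqref{eq:cond2} at the base point $x_0=0$: the origin qualifies because $0\in\Sing F_0$ (the singular germ has dimension $\ge 1$, hence contains its base point) and $\partial_t F(0,0)=0$ (differentiate $F(0,t)\equiv 0$ in $t$). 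Condition \eqref{eq:cond2} then yields $|\partial_t F|\le c_0\|\partial_{x_1}F,\ldots,\partial_{x_n}F\|$ on a whole neighbourhood $U$ of $(0,0)$; on $\Sing\widetilde F\cap U$ the right-hand norm vanishes, forcing $\partial_t F\equiv 0$ there, so $\Sing\widetilde F\subseteq\Sing F$ as germs.

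Granting $\Sing\widetilde F=\Sing F$, every point of $\Sing\widetilde F$ is a critical point of $F$, so $\d F\equiv 0$ there. Hence $F$ is locally constant on the smooth locus of each irreducible component of the germ $\Sing\widetilde F$, and since each such component passes through the origin where $F=0$, we obtain $F\equiv 0$ on $\Sing\widetilde F$. Therefore $\Delta=\widetilde F(\Sing\widetilde F)=\{(F(x,t),t):(x,t)\in\Sing\widetilde F\}\subseteq\{0\}\times\bK$. As $(0,0)\in\Delta$ and $\Delta$ is a germ of an analytic curve (Proposition \ref{p:delta1}) contained in the smooth line $\{0\}\times\bK$, it is either the single point $\{(0,0)\}$ or the whole axis $\{0\}\times\bK$, the latter occurring exactly when $\Sing\widetilde F\not\subseteq\{t=0\}$, i.e. when the family keeps singular points for $t\neq 0$.

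For the implications I would argue as follows. The equality $\Sing\widetilde F=\Sing F$ gives equality of the $\{t=0\}$-slices; since $(\Sing\widetilde F)_{|t=0}=\Sing F_0$ and $(\Sing F)_{|t=0}=\Sing F_0\cap\{\partial_t F(x,0)=0\}$, this is precisely the inclusion $\Sing F_0\subseteq\{\partial_t F(x,0)=0\}$, which is condition \eqref{eq:cond0}; thus $\eqref{eq:cond2}\Rightarrow\eqref{eq:cond0}$ is immediate. For $\eqref{eq:cond2}\Rightarrow\eqref{eq:cond}$, fix $x_0\in\Sing F_0\m\{0\}$; by the inclusion just obtained $x_0\in\Sing F_0\cap\{\partial_t F(x,0)=0\}$, so \eqref{eq:cond2} supplies $|\partial_t F|\le c_{x_0}\|\partial_{x_1}F,\ldots,\partial_{x_n}F\|$ near $(x_0,0)$ for all $(x,t)$, and a fortiori for $(x,t)\notin\Sing\widetilde F$, which is exactly \eqref{eq:cond}. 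Finally $\eqref{eq:cond}\Rightarrow\eqref{eq:cond0}$ is the observation already recorded in the proof of Theorem \ref{t:tame}: for $y\in\Sing F_0\m\{0\}$ one lets $(x,t)\to(y,0)$ through the dense set of points $(x,t)\notin\Sing\widetilde F$ and passes to the limit in \eqref{eq:cond}, where $\|\partial_{x_1}F,\ldots,\partial_{x_n}F\|\to 0$ forces $\partial_t F(y,0)=0$.

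The only genuinely delicate point is the sharp equality $\Delta=\{0\}\times\bK$ as opposed to the inclusion $\Delta\subseteq\{0\}\times\bK$: establishing it amounts to ruling out $\Sing\widetilde F\subseteq\{t=0\}$, equivalently to showing that the deformation retains singular points arbitrarily close to the origin for small $t\neq 0$, and I expect this to be where the dimension hypothesis $\dim\Sing F_0\ge 1$ together with the strength of \eqref{eq:cond2} must enter. For the downstream applications only the inclusion $\Delta\subseteq\{0\}\times\bK$ is actually needed, since it already confines the discriminant to the parameter axis and keeps $\widetilde F$ a submersion over all values $s\neq 0$. A minor but necessary point is the reading of the phrase ``when $(x,t)\to(x_0,0)$'' in \eqref{eq:cond2}: I would note that it is equivalent to the inequality holding throughout a neighbourhood of $(x_0,0)$ with the fixed constant $c_{x_0}$, which is precisely what licenses evaluating \eqref{eq:cond2} directly on points of $\Sing\widetilde F$ in the first step.
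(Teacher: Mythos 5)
Your proof is correct and follows essentially the same route as the paper: the inclusion $\Sing F\subset\Sing\widetilde F$ is definitional, the reverse inclusion comes from applying \eqref{eq:cond2} at $x_0=0$ (the paper phrases this as a Curve Selection Lemma contradiction, but that is equivalent to your direct evaluation of the inequality on a neighbourhood of the origin, where the vanishing of $\|\partial_{x_1}F,\ldots,\partial_{x_n}F\|$ forces $\partial_t F=0$), and the two implications are read off from the resulting slice inclusion $\Sing F_0\subset\{\partial_t F(x,0)=0\}$ exactly as in the text. The one place you go beyond the paper is the claim $\Delta=\{0\}\times\bK$: the paper's proof is silent on it, whereas you derive the inclusion $\Delta\subseteq\{0\}\times\bK$ (via $F\equiv 0$ on $\Sing\widetilde F=\Sing F$, which is correct once one connects points of the germ to the origin by analytic arcs inside $\Sing F$) and rightly flag that the sharp equality would additionally require $\Sing\widetilde F\not\subseteq\{t=0\}$, a point the paper does not address and which is not needed downstream.
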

\begin{proof}
Let us show the equality $\Sing F = \Sing  \widetilde F$.  The inclusion of set germs at the origin $\Sing F \subset  \Sing  \widetilde F$ is a direct  consequence of the definition of the singular loci.  The converse inclusion follows from condition \eqref{eq:cond2}. Indeed,  by \emph{reductio ad absurdum}, let us suppose
 that this does not hold. Using the Curve Selection Lemma, one then obtains an analytic path $\eta(s)=\bigl(x(s),t(s)\bigr)$  for $s\in[0,\epsilon)$, where $t(0)=0$ and $x(0)=0$, such that $\eta(s) \in \Sing  \widetilde F \m \Sing F$ for all $s\not= 0$. This means:  $\frac{\partial F}{\partial x_{i}}(\eta(s))= 0$  and $\frac{\partial F}{\partial t}(\eta(s))\not= 0$  for all $s\not= 0$ and all $i=1,\ldots, n$. This contradicts the inequality \eqref{eq:cond2} for the point $x_{0} =0$.  Here ends the proof of the inclusion $\Sing F \supset  \Sing  \widetilde F$, and thus of the equality $\Sing F =  \Sing  \widetilde F$. 

By restricting  this equality to the slice $t=0$, we get in particular the following inclusion  of the set germs at $0$:
\begin{equation}\label{eq:inclsets}
  \Sing F_{0}\subset \biggl\{ \frac{\partial F}{\partial t} (x,0) =0\biggr\}, 
\end{equation}
which shows the implication  \eqref{eq:cond2} $\implies$ \eqref{eq:cond}. 
 
\noindent The implication $\eqref{eq:cond} \Longrightarrow \eqref{eq:cond0}$ has been actually shown in 
the beginning of the proof of Theorem \ref{t:tame}, see \eqref{eq:inclsing}. 
Alternatively, let us remark that we may apply the  preceding argument  
at any fixed point $x_{0} \in \Sing F_{0}\m \{0\}$ to conclude that there is a neighbourhood $U(x_{0})$ within the inclusion \eqref{eq:inclsets} holds. This implies the inclusion \eqref{eq:inclsets} of set-germs at $0$, which concludes our proof. 
\end{proof}

\begin{theorem}\label{t:general}
 Let $F(x,t) = F_{t}(x)$ be a $\bK$-analytic deformation  of $F_{0}$  such that the Jacobian ideal
 $(\partial F_{t})$ is included in the integral closure $\overline{(\partial F_{0})}$ for all $t$ close enough to 0.
 
 Then the deformation $F$ of $F_{0}$ is a deformation with fibre constancy in the sense of Definition \ref{d:topconstantaway}, and $\widetilde F$ has a Milnor-Hamm tube fibration \eqref{eq:tube}, cf Definition \ref{d:tube}.
\end{theorem}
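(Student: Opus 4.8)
The plan is to prove the leftmost implication of the schema \eqref{eq:schema}: the Jacobian inclusion $(\partial F_{t})\subset\overline{(\partial F_{0})}$ (for all $t$ close to $0$) forces condition \eqref{eq:cond}, after which Theorem \ref{t:genP} immediately packages both conclusions (fibre constancy and the Milnor-Hamm tube fibration \eqref{eq:tube}). Thus the whole content is the derivation of \eqref{eq:cond}, and the difficulty is structural: the hypothesis is a statement about each slice $F_{t}$ \emph{separately}, whereas \eqref{eq:cond} is a \emph{joint} estimate on $(x,t)$ as $(x,t)\to(x_{0},0)$. Bridging the two is the heart of the argument. Throughout I work with the presentation $F(x,t)=f_{0}(x)+\sum_{j\ge1}t^{j}f_{j}(x)$ of \eqref{eq:pres}.

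First I would record two algebraic consequences of the hypothesis. The equality of zero sets $\cZ(J)=\cZ(\overline J)$ from Corollary \ref{c:integral}, combined with $(\partial F_{t})\subset\overline{(\partial F_{0})}$, yields $\Sing F_{0}\subset\Sing F_{t}$ for every small $t$; hence $\frac{\partial F}{\partial x_{i}}(x,t)=0$ for every $x\in\Sing F_{0}$ and every $t$, and comparing coefficients of $t^{k}$ gives that the gradient of $f_{k}$ vanishes on $\Sing F_{0}$ for every $k$. Integrating $\nabla f_{k}\equiv0$ along paths inside the connected set-germ $\Sing F_{0}$, whose components all pass through $0$ where $f_{k}(0)=0$, I conclude $f_{k}|_{\Sing F_{0}}=0$ for all $k$. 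In particular $f_{j}(x_{0})=0$ at every $x_{0}\in\Sing F_{0}$, the endpoint vanishing I will use below.

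The main step is a uniform \L ojasiewicz estimate: there exist a neighbourhood $U\ni0$, and $\varepsilon,C>0$, with $\bigl\|\frac{\partial F}{\partial x_{1}}(x,t),\dots,\frac{\partial F}{\partial x_{n}}(x,t)\bigr\|\le C\bigl\|\frac{\partial F}{\partial x_{1}}(x,0),\dots,\frac{\partial F}{\partial x_{n}}(x,0)\bigr\|$ on $U\times(-\varepsilon,\varepsilon)$. I would prove this by contradiction via the Curve Selection Lemma: a failure produces an analytic arc $(x(s),t(s))\to(0,0)$ along which $\ord_{s}\bigl\|\frac{\partial F}{\partial x}(x(s),t(s))\bigr\|$ is strictly below $q:=\ord_{s}\bigl\|\frac{\partial F}{\partial x}(x(s),0)\bigr\|$. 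For each fixed $\tau$ the hypothesis $\frac{\partial F_{\tau}}{\partial x_{i}}\in\overline{(\partial F_{0})}$ gives, through Definition \ref{d:integral}, that $\ord_{s}\frac{\partial F}{\partial x_{i}}(x(s),\tau)\ge q$; writing $\frac{\partial F}{\partial x_{i}}(x(s),\tau)=\sum_{k}\tau^{k}A_{i,k}(s)$ with $A_{i,k}(s):=\frac{\partial f_{k}}{\partial x_{i}}(x(s))$ and extracting, for each power of $s$ below $q$, the coefficient (a power series in $\tau$ vanishing for all small $\tau$, hence identically) forces $\ord_{s}A_{i,k}(s)\ge q$ for all $i,k$. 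Substituting the genuine arc $t=t(s)$ then gives $\ord_{s}\frac{\partial F}{\partial x_{i}}(x(s),t(s))\ge q$, contradicting the arc. This coefficient-extraction device, converting per-slice membership into a joint order bound, is the crux and the step I expect to be the main obstacle to get fully rigorous.

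Finally I would fix $x_{0}\in\Sing F_{0}\setminus\{0\}$ and verify $\frac{\partial F}{\partial t}\in\overline{\bigl(\frac{\partial F}{\partial x_{1}},\dots,\frac{\partial F}{\partial x_{n}}\bigr)}$ as germs at $(x_{0},0)$ via the arc criterion of Definition \ref{d:integral}, after which Proposition \ref{p:integral} turns this membership into exactly the inequality \eqref{eq:cond}. For an arc $\mu(s)=(x(s),t(s))\to(x_{0},0)$ set $\ell:=\ord_{s}(x(s)-x_{0})$, $a:=\ord_{s}t(s)$, $\alpha_{k}:=\min_{i}\ord_{s}\frac{\partial f_{k}}{\partial x_{i}}(x(s))$ and $\beta_{j}:=\ord_{s}f_{j}(x(s))$. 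The uniform estimate, applied at $x_{0}$ and run through the same extraction, gives $\alpha_{k}\ge\alpha_{0}$ for all $k$, while the chain rule together with $f_{j}(x_{0})=0$ gives $\beta_{j}\ge\alpha_{j}+\ell\ge\alpha_{0}+\ell$. Hence
\begin{equation*}
\ord_{s}\frac{\partial F}{\partial t}(\mu(s))\ \ge\ \min_{j\ge1}\bigl(a(j-1)+\beta_{j}\bigr)\ \ge\ \alpha_{0}+\ell,
\end{equation*}
whereas the surviving $k=0$ term shows $\min_{i}\ord_{s}\frac{\partial F}{\partial x_{i}}(\mu(s))=\alpha_{0}$; since $\ell\ge1$ the required inequality $\ord_{s}\frac{\partial F}{\partial t}(\mu(s))\ge\min_{i}\ord_{s}\frac{\partial F}{\partial x_{i}}(\mu(s))$ holds, the degenerate cases $x(s)\equiv x_{0}$ and $t(s)\equiv0$ being immediate. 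This establishes \eqref{eq:cond}, and Theorem \ref{t:genP} then yields the fibre constancy and the Milnor-Hamm tube fibration, completing the proof.
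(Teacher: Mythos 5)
Your argument is correct and, at the level of overall architecture, matches the paper's: both reduce the theorem to the integral dependence $\frac{\partial F}{\partial t}\in\overline{\bigl(\frac{\partial F}{\partial x_{1}},\dots,\frac{\partial F}{\partial x_{n}}\bigr)}$ at points of $\Sing F_{0}\m\{0\}$ (equivalently condition \eqref{eq:cond}, via Proposition \ref{p:integral}), and both prove it by the same order computation along arcs --- the denominator has order exactly $\alpha_{0}=\min_{i}\ord_{s}\partial_{i}f_{0}(x(s))$ coming from the surviving $f_{0}$-term, while the numerator has strictly larger order because each $f_{j}$ vanishes to order $>\alpha_{0}$ along the arc (your $\beta_{j}\ge\alpha_{j}+\ell$ is the paper's Lemma \ref{l:minord}). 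The genuine difference is in how you obtain the key algebraic input, namely the paper's Lemma \ref{l:gen} that $\partial_{i}f_{j}\in\overline{(\partial F_{0})}$ for all $i,j$: the paper deduces it from Cartan's closedness theorem for ideals of holomorphic functions, and in the real case must complexify because Cartan's theorem fails over $\bR$ (Remark \ref{r:cartan}); you instead extract the coefficient of each $s^{m}$ with $m<q$ from the inequality $\ord_{s}\partial_{i}F(x(s),\tau)\ge q$, valid for every small fixed $\tau$, and use only that a convergent power series in $\tau$ vanishing on a neighbourhood of $0$ vanishes identically. Your device is more elementary, treats $\bK=\bR$ and $\bK=\bC$ uniformly, and sidesteps the complexification entirely; what the paper's route buys is a reusable closedness statement (a uniform limit of elements of a closed ideal stays in the ideal) that it also applies to get Lemma \ref{l:gen}(b), independently of any chosen arc.

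One small repair is needed in your opening paragraph. You derive $\nabla f_{k}\equiv0$ on $\Sing F_{0}$ by fixing $x\in\Sing F_{0}$ and comparing coefficients of $t^{k}$ in $\frac{\partial F}{\partial x_{i}}(x,t)=0$; but the inclusion $\Sing F_{0}\subset\Sing F_{t}$ obtained from $\cZ(J)=\cZ(\overline J)$ holds only on a neighbourhood that a priori depends on $t$, so for a fixed $x$ you do not immediately know that $t\mapsto\frac{\partial F}{\partial x_{i}}(x,t)$ vanishes on a whole interval. This is harmless: your own extraction step already yields $\partial_{i}f_{k}\in\overline{(\partial F_{0})}$ for every $i,k$, whence each $\partial_{i}f_{k}$ vanishes on $\Sing F_{0}=\cZ\bigl(\overline{(\partial F_{0})}\bigr)$ near $0$ by Corollary \ref{c:integral}, and your connectivity argument (constancy along arcs in the arcwise connected set germ $\Sing F_{0}$, together with $f_{k}(0)=0$) then gives the endpoint vanishing $f_{k}|_{\Sing F_{0}}=0$ exactly as required; the paper relies on the same connectivity fact when it asserts $\cZ(f_{j})\supset\cZ(\partial f_{j})$.
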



\begin{proof}
Let  $I=\overline{(\partial F_{0})}$ denote the integral closure of the Jacobian ideal $(\partial F_{0})$. 
By our hypothesis we have the inclusion $(\partial F_{t}) \subset  I$, for any $t$ close enough to 0.  
\begin{lemma}\label{l:gen}
The following inclusions hold:
\begin{enumerate}
 \rm \item \it  $(\partial f_{j}) \subset  I$, for any $j\in \bN$.
 \rm \item \it  $\bigl(\partial_{x} (\frac{\partial F}{\partial t})\bigr)\subset  I$.
\end{enumerate}
\end{lemma}

 
\begin{proof}
We will treat separately the two cases, $\bK = \bC$ and $\bK = \bR$.

\smallskip

\noindent \underline{Case $\bK = \bC$.} 
By  the invariance of the Jacobian ideal inclusion $(\partial F_{t}) \subset  I$, we have that 
$\sum_{j\geq 1}^\infty t^j\partial_i f_j(x)\in I$ for every $i\in\{1,\dots,n\}$, and for all 
 $t$ close enough to 0. Dividing out by  $t\neq 0$, we thus get:
 \begin{equation}\label{eq:ideal}
 \partial_i f_1+\sum_{j\geq 2}^\infty t^{j-1}\partial_i f_j(x)\in I.
 \end{equation}
We invoke the following classical theorem by Henri Cartan.  By the reason explained in Remark \ref{r:cartan} this holds only for $\bK = \bC$, and this is why we need to prove separately the real case of Theorem \ref{t:general}.
 
 \begin{theorem}\cite[p.194]{Ca} \label{t:cartan}\it \ \\
 Let $\cO_n$ be the ring of germs of holomorphic functions at $0\in\bC^n$ and
let $I$ be an ideal of $\cO_n$. Suppose that $V$ is a neighborhood of $0\in\bC^n$ and $g\in\cO(V)$. If there exists a sequence $(g_k)_{k\in \bN}$
of holomorphic functions $g_k\in \cO(V)$ such that $g_k\in I$ for all $k$ and $(g_k)_{k\in \bN}$ converges uniformly on compact subsets of $V$ to $g$, then $g\in I$.
 \end{theorem}

We choose a polydisk $\{(x,t)\in\bC^n\times\bC \mid |x_i|<\alpha_i,|t|<\delta,  i=1,\dots,n\}$ on which $F$ is holomorphic, i.e. for some well-chosen constants $\alpha_{i}>0$. Hence each $f_j$ is 
holomorphic on $V:=\{x\in\bC^n \mid |x_i|<\alpha_i,  i=1,\dots,n\}$.

 We choose some sequence $t_{k}\to 0$ and define $g_{k}(x) := \sum_{j\geq 2}^\infty t_{k}^{j-1}\partial_i f_j(x)$ which, by its definition, converges to the function germ 0 as $k\to \infty$, uniformly on compact sets.  Then
 Theorem \ref{t:cartan} applies,  and from \eqref{eq:ideal} we may deduce that $\partial_i f_1\in I$.
 
We may then apply inductively the same reasoning and obtain that  $\partial_i f_j\in I$ for every $j\ge 1$.  This concludes the proof of Lemma \ref{l:gen}(a) over $\bC$.

\smallskip

\noindent (b). From (a) it  follows that the infinite series  in variable $t$:
$$\partial_{i} \biggl(\frac{\partial F}{\partial t}\biggr) = \sum_{j=1}^\infty   j t^{j-1} \partial_{i}f_j(x)$$
  has all coefficients in $I$.
To show that this series converges to an element of $I$, we apply once again Cartan's Theorem \ref{t:cartan} to the finite partial sums $h_{p}(x,t) := \sum_{j=1}^p   j t^{j-1} \partial_{i}f_j(x)$, this time as function germs in variables $x$ and $t$.

This ends the proof of Lemma  \ref{l:gen} over  $\bC$.

\medskip

\noindent \underline{Case $\bK = \bR$.} 
We start from the presentation \eqref{eq:pres},  with real analytic function germs $f_{j}$, and where
 the Jacobian ideals $(\partial F_{0})$ and $(\partial F_{t})$  are  with real analytic coefficients. By $I=\overline{(\partial F_{0})}$ we will denote here the real integral closure.

We choose $\delta>0$, $\alpha_i>0$, $i\in\{1,\dots,n\}$,
such that the Taylor series of $F$ at $(0,0)\in\bR^n\times\bR$ converges on 
$\{|x_i|<\alpha_i \mid i=1,\dots,n\}\times\{|t|<\delta\}$. In particular the Taylor series of each $f_j$ converges on 
$\{|x_i|<\alpha_i \mid i=1,\dots,n\}\subset \bR^n$.

Let $F^c$ be the complexification of $F$ (i.e. if $\sum_{J\subset \bN^n, j\in\bN}a_{j,J}t^jx^J$
is the Taylor series of $F$ then $F^c(z,\lambda)=\sum_{J\subset \bN^n, j\in\bN}a_{j,J}\lambda^jz^J$ for $z_j,\lambda\in\bC$)
and let $f^c_j$  be the complexification of $f_j$.

We consider now the polydisk  $P\subset\bC^n\times\bC$, $P=\{(z,\lambda) \mid |z_i|<\alpha_i,|\lambda|<\delta\}$. Then 
$F^c$ is holomorphic on $P$.  Let $I_{\bC}$ be the ideal in $\cO_n$ generated by $\partial_zF^c_0$.

If $t\in\bR$, $|t|<\delta$, is such that  $\langle\partial_xF_t\rangle\subset I$, then it follows that 
 $\langle\partial_zF^c_t\rangle\subset I_{\bC}$. 
 We notice now that in the proof of the $\bC$-analytic case, we did not actually need the inclusion of
 the Jacobian ideals for all complex values of the parameter, namely some sequence convergent to $0$ suffices. We obtain that 
 $\partial f_j^c\subset I_{\bC}$.
  This means that $\partial_i f^c_j$ is a linear combination with complex analytic coefficients of the generators of  $I_{\bC}$.
  But since the restriction to $\bR^n$ of $\frac{\partial f^c_j}{\partial z_i}$ is $\frac{\partial f_j}{\partial x_i}$, hence a real function, it follows that, by taking the real part of each coefficient, we obtain a real linear combination equal 
  to $\partial_i f_j$. This shows that we actually have $\partial_i f_j\in I$, and the proof of point (a) is done.

\smallskip
 
 To show (b) we proceed in the same manner: we interpret the real analytic functions as complex analytic and we apply Cartan's theorem, as explained above, to get $\partial_{i} \bigl(\frac{\partial F}{\partial t}\bigr)\in I_{\bC}$. Now, by the same 
 arguments for the linear combination, we get that actually $\partial_{i} \bigl(\frac{\partial F}{\partial t}\bigr)\in I$.
\end{proof}

 \begin{remark}\label{r:cartan}
 Theorem \ref{t:cartan} is not true over $\bR$. A counter-example can be found for instance in \cite[Ch. 4 \S 6]{ABF}.  Namely, one constructs a sequence of real functions $\{f_k\}$ which are analytic on the real line $\bR$,  that converge 
uniformly on compacts to some analytic function $f$, and such that the germ at the origin of each $f_k$ is in some ideal $I$, but
that $f$ is not in $I$. The problem is that the radius  of  convergency of the Taylor series of $f_k$ at the origin goes to
zero as $k\to \infty$.  In contrast, note that the setting of our above proof insures  a stronger convergence, as defined e.g. in \cite[Definition 6.2]{ABF}.
 \end{remark}

\medskip

\noindent We continue the proof of Theorem \ref{t:general}. 
\medskip

Let us show that condition \eqref{eq:cond2} is satisfied.  We apply Proposition \ref{p:integral} to the ideal $(\partial F_0)$ and its generators $\partial_1F_0,\dots ,\partial_nF_0$, and for the 
choice of a neighbourhood $U$ of the origin such that the conclusion of Proposition \ref{p:integral} holds.

So let us fix some point $(x_{0},0)$ with $x_{0}\in  \Sing f_{0} \cap \Bigl\{\frac{\partial F}{\partial t} =0\Bigr\}_{| t=0}$. 

Let  us consider  an analytic path $\eta(s)=\bigl(x(s),t(s)\bigr)$  for $s\in[0,\epsilon)$, where $t(0)=0$ and 
 $x(0) = x_{0}$, and let us
  compute the limit, when $s\to 0$, of the fraction:
    \begin{equation}\label{eq:fracs}
\frac{\frac{\partial F}{\partial t}(x(s), t(s))}{\| \partial_{x} F (x(s), t(s) )\|}.
\end{equation}

By our hypothesis on the analytic path $\eta(s)$, we get $\lim_{s\to 0} \partial_jF_{t(s)}(x(s))  = \partial_jF_{0}(x(0)) = \partial_j f_{0}(x(0)) = 0$ for all $j=1, \ldots , n$.
It follows in particular that the limit of the fraction \eqref{eq:fracs}, when $s\to 0$, is of type ``$\frac{0}{0}$''.

\

Let then set $\kappa := \min_{i=1}^{n} \bigl\{ \ord_{s}\partial_if_0 (x(s)) \bigr\}$, and note that $\kappa >0$ since  $x(0)\in \Sing f_{0}$.  

\begin{lemma}\label{l:min}
 Let  $\ell\in \{1,\dots,n\}$ such that $\ord_{s}\partial_\ell f_0 (x(s))=\kappa$.
Then
 $$\ord_{s}\frac{\partial F}{\partial x_\ell}\bigl(x(s),t(s)\bigr)=\kappa.$$
\end{lemma}
\begin{proof}
By Lemma \ref{l:gen}, for every $j\in\bN$ we have  
$\partial_\ell f_j\in (\partial_x f_0) =I$. This implies that $\ord_{s}\partial_\ell f_j(x(s))\geq \kappa$. 
Since $t(0)=0$ we also have $\ord_s t^j(s)\geq 1$ for $j\geq 1$. We deduce that 
$\ord_s\sum_{j=1}^N t^j(s)\partial_\ell f_j(x(s))>\kappa$
and, since $\ord_{s}\partial_\ell f_0 (x(s))=\kappa$,
our claim follows.
\end{proof}

Our next claim is:
\begin{equation}\label{eq:kappa}
 \ord_s\frac{\partial F}{\partial t}(x(s),t(s))>\kappa. 
\end{equation}
To show this, we need the following basic result:  


\begin{lemma}\label{l:minord}
Let $x:[0,\varepsilon)\to\bK^n$ be an analytic path. 
Let  $h : (\bK^n, x(0)) \to (\bK, 0)$ be an analytic function germ such that $\partial_i h (x(0)) =0$ for all $i=1, \ldots , n$. Then: 
$$\ord_{s} h(x(s))>\min_{i}\ord_{s} \partial_i h (x(s)).$$
\end{lemma}
\begin{proof} 
As $h(0) =0$, we have:
 $$\ord_{s} h(x(s))> \ord_{s}\frac{d}{ds}\Big(h(x(s))\Big)=\sum_i \partial_i h (x(s)) x'(s)\geq \min_{i}\ord_{s} \partial_i h (x(s)).$$
\end{proof}

By Lemma \ref{l:gen}(a)  and Corollary \ref{c:integral} we have, for any $j\ge  0$, the inclusions of zero sets:
$$\cZ(f_{j}) \supset \cZ(\partial f_{j}) \supset \cZ(I) =\cZ(\partial f_{0}).$$
Therefore $h=f_{j}$ satisfies the hypotheses of  Lemma \ref{l:minord}, for any $j\ge 0$. Applying thus Lemma \ref{l:minord}, 
we deduce:
\begin{equation}\label{eq:minord}
\ord_{s} f_j(x(s))>\min_{i}\ord_{s} \partial_i f_j (x(s)).
\end{equation}

 Next, by Lemma \ref{l:gen} again, we have  $\partial_i f_j\in \overline{(\partial f_0)}$ for any $i\in\{1,\dots,n\}$, hence we get $\ord_s \partial_i f_j(x(s))\geq \kappa$ by Corollary \ref{c:integral} and our choice of $U$ satisfying Proposition \ref{p:integral}. Combining this with the inequality \eqref{eq:minord} we then get: 
 $$\ord_{s} f_j(x(s))>\kappa $$
 for  any $j\ge 0$, which shows that our claimed inequality \eqref{eq:kappa}  holds indeed.

\

Finally, by \eqref{eq:kappa} we get that the order of the numerator in \eqref{eq:fracs} is $>\kappa$. By Lemma \ref{l:min}
we get that the order of the denominator in \eqref{eq:fracs} is $=\kappa$. This implies that the limit of \eqref{eq:fracs} is 0, and therefore condition \eqref{eq:cond2} is satisfied.

By Proposition \ref{p:delta2} it then follows that condition \eqref{eq:cond} is satisfied, and thus by Theorem \ref{t:tame} and Theorem \ref{t:jt},  it  follows that  $F$ is tame,  that  $\widetilde F$ has a Milnor-Hamm tube fibration \eqref{eq:tube},  and  that  $F$ is a deformation with fibre constancy (in the sense of Definition \ref{d:topconstantaway}).

This finishes the proof of Theorem \ref{t:general}. 
 \end{proof}

\section{Examples}\label{examples}

By the following four examples we show here the variety of  situations that may occur: the Jacobian criterion of Theorem \ref{t:general} is satisfied in  \ref{ex:integralcl} but not in   \ref{ex:notJ}; the condition \eqref{eq:cond} is satisfied in \ref{ex:notJ} but not in \ref{ex:tame}; and finally,  the tameness condition holds in  \ref{ex:tame} but not in \ref{ex:nottame}.  We remind here too that, in all our notations, the maps and the sets defined by them are regarded as germs at the origin.

\begin{example}\label{ex:notJ}  
Let $F: (\bK^{2}\times \bK, 0) \to (\bK,0)$, $F (x,y,t) = y^{2}(x^{2}-(y-t)^{2})$.   This is a \emph{deformation of a line singularity}, in the terminology used  by Siersma in \cite{Si1}  in the complex setting only. 
For any parameter $t$ close enough to 0,   the function germ $F_{t}$ has indeed as singular locus $\Sing F_t := \{ y=0\}\subset \bK^{2}$, hence the same line.  

In our setting, we have the following set germs in $(\bK^{3},0)$:  $\Sing F =  \{x=0, y=t\} \cup \{ y=0\}$, and 
 $\Sing \widetilde F = \{x=0, y=t\} \cup \{x=0, 2y=t\} \cup \{y=0\}$.

Let us notice the strict inclusions:  $\bigcup_{t}\Sing F_{t} =  \{ y=0\} \subsetneq \Sing F \subsetneq  \Sing \widetilde F$.

The equality $ (\Sing F)_{| t=0} = (\Sing \widetilde F)_{| t=0}$ however holds, thus the preliminary condition \eqref{eq:cond0} is satisfied. The discriminant $\Delta := \widetilde F(\Sing \widetilde F) $ is the union of the germ  at 0 of the axis $\{0\} \times \bK$ with the germ of the curve in $\bK^{2}$ parametrized as $\{(-t^{4}/16, t)\}$.

By simple computations we see that condition \eqref{eq:cond} is satisfied, thus Theorem \ref{t:genP} holds, whereas condition \eqref{eq:cond2} is not satisfied precisely at the origin, and therefore the hypothesis of the  Jacobian criterion Theorem \ref{t:general} does not hold either.  In the complex setting,  such a deformation of a line singularity with singular locus $L$ was called ``admissible'' in \cite{Si1}, \cite{Si-icis} and in more other papers, and has the property that the line $L= \Sing F_{t}$  preserves its generic transversal type (which is $A_{1}$ in the above example),  while from the origin may spring, when $t\not= 0$,  finitely many special points along $L$ (i.e. with a different transversal type), as well as finitely many singular points outside $L$.  

In the complex setting, it turns out by a proof similar to L\^{e}-Saito's in \cite{LS} applied at some generic point $p\in L\m \{0\}$,  that the $t$-constancy of the generic transversal Milnor number implies condition \eqref{eq:cond}, thus the ``admissible'' deformations are in fact tame,  by our Theorem \ref{t:tame}.  
\end{example}

\begin{example}\label{ex:integralcl}
Let us consider the deformation $F(x,t)=z_1^5+z_2^5+z_1^6z_2^6z_3^2+tz_1^3z_2^3$ of the polynomial 
$F_0=z_1^5+z_2^5+z_1^6z_2^6z_3^2$ with $\Sing F_0 = \{ z_{1}= z_{2}= 0\}$, either over $\bC$ or over $\bR$. 
  Note that the inclusion $(\partial F_{t}) \subset (\partial F_0)$ doesn't hold since 
$z_1^3z_2^2\in \overline{(\partial F_0)}\setminus{(\partial F_0)}$
and $z_1^2z_3^2\in \overline{(\partial F_0)}\setminus{(\partial F_0)}$.

One easily checks that the inclusion $(\partial F_{t}) \subset \overline{(\partial F_0)}$ holds, hence our Theorem \ref{t:general} applies.
\end{example}

\begin{example}\label{ex:tame}
Starting from the Whitney umbrella equation $x^{2}+ y^{2}z=0$, let us consider the real analytic function $F_{0}(x,y,z) = x^{3}+ xy^{2}z$ and its deformation $F(x,y,z;t) = (x^{2}+ y^{2}z)(x-t)$.  

The singular locus $\Sing F_{0}$ consists of the germs of the two axes $\{x=y=0\}\cup\{x=z=0\}$, the first of multiplicity 4 and the second of multiplicity 1.
 For $t\not=0$,  the singular locus $\Sing F_{t}$ is the germ at $0\in \bR^{3}$ of $\{x=y=0\}$ with multiplicity 1. There are 
 two more curves in $\bR^{3}\times \{t\}$ which emerge from $\Sing F_{0}$:
$$ \bigl\{x= \frac23 t, \  y=0\bigr\}\cup \bigl\{x=t, \ x^{2} + y^{2}z =0\bigr\}$$ 
that are no more germs at $0\in \bR^{3}$. Their union over $t$, together with $\cup_{t}\Sing F_{t}$, constitute the singular set germ $\Sing \widetilde F$.

By straightforward computations, one establishes that  $F$ satisfies condition \eqref{eq:cond0},  that $F$ does not satisfy condition \eqref{eq:cond}, but that $F$ is tame, cf Definition \ref{d:tamedefo}.    Indeed, the Milnor set:
$$M(\widetilde F) = \{ y^{2}=2z^{2}, 3x^{2} - 2tx +4y^{5} =0\} \cup \{ y=z=0\}$$
 intersects $\{t=0\} \cap \Sing F_{0}$ at the origin of $\bR^{3}\times \bR$ only.

Our Theorem \ref{t:jt} then tells that this is a deformation with fibre constancy, and moreover that $\widetilde F$ has a Milnor-Hamm tube fibration. 

Let us point out that in \cite[Example 9.2]{Ho} the author considers the same $F_{0}$ in the complex analytic setting (moreover within a 2-parameter deformation) and  shows that his algebraic criterion \cite[Theorem 1.2]{Ho} applies to conclude that this is a deformation is ``admissible" in the terminology of  \cite{ST-mildefo}. Through this deformation, Hof can then compute  the homology of the Milnor fibre of $F_{0}$ by using the special  method developed \cite{ST-mildefo} in the complex setting. 
\end{example}
\begin{example}\label{ex:nottame}
Let us take a look at the  deformation $F=y^2+x^2(tz-x)$ discussed in \cite[Example 9.3]{Ho}. 

We have $\partial F=( -3x^2+2txz, 2y,  tx^2 ; \ x^{2}z )$, thus $\Sing F = \Sing \widetilde F$, and in particular our condition \eqref{eq:cond0} is satisfied.
Note that the Jacobian ideal $\partial F_{t}$ is not constant for $t$ in some neighbourhood of 0, despite the fact that the set germ $\Sing F_t$ is constant.

 The computation of the Milnor set over $\bR$ is easier here; we get that $M(\widetilde F)$  contains a surface germ in the  3-space $\{ y=0\} \subset \bR^{3}\times \bR$ of coordinates $x,z,t$, and the intersection of $M(\widetilde F)$ with the slice $t=0$ contains $\Sing F_0 = \{ (0,0, z,0) \}$.
This tells that $\widetilde F$ is not $\rho$-regular, which implies that the $\partial$-Thom regularity fails\footnote{There are examples of maps which fail to be Thom regular but they are however $\rho$-regular, cf \cite{ACT}. It remains here the question if this can happen in case of deformations.} along the $z$-axis.
As we have remarked at the end of the proof of  Theorem \ref{t:jt},  the $\rho$-regularity is equivalent to the fact that the tube map \eqref{eq:tube} is a proper stratified submersion.  

 Since this example over $\bR$ is not $\rho$-regular (i.e. not tame, cf Definition \ref{d:tamedefo}), it cannot satisfy any other criteria implying deformation with fibre constancy.    In the complex setting,  Hof  remarks in \cite{Ho} that this example does not satisfy his criteria; then he computes explicitly the fibres of the fibration \eqref{eq:faraway} and finds that  the homotopy type varies (actually one has $S^2$ for $t\not=0$ and $S^1\vee S^1$ for $t=0$), confirming that the deformation does not have fibre constancy. 
\end{example}


\bigskip

\vspace{\fill}

\end{document}